\definecolor {processblue}{cmyk}{0.96,0,0,0}
  \newtheorem{The}{Theorem}[section]
  \newtheorem{Pro}[The]{Proposition}
  \newtheorem{Lem}[The]{Lemma}
  \newtheorem{Rem}[The]{Remark}
\newcommand{\bsm}{\begin{smallmatrix}}
\newcommand{\esm}{\end{smallmatrix}}
\newcommand{\bbm}{\begin{matrix}}
\newcommand{\ebm}{\end{matrix}}
\newcommand{\Hom}{\rm{Hom}}
\newcommand{\End}{\rm{End}}
\theoremstyle{definition}
\theoremstyle{plain}
\theoremstyle{definition}
\numberwithin{equation}{section}
\newtheorem*{theorem a*}{Theorem A}
\newtheorem*{theorem b*}{Theorem B}
\begin{document}

\title{ Auslander correspondence for kawada rings}

\author{Ziba Fazelpour}
\address{School of Mathematics, Institute for Research in Fundamental Sciences (IPM), P.O. Box: 19395-5746, Tehran, Iran}
\email{z.fazelpour@ipm.ir}
\author{Alireza Nasr-Isfahani}
\address{Department of Pure Mathematics\\
Faculty of Mathematics and Statistics\\
University of Isfahan\\
P.O. Box: 81746-73441, Isfahan, Iran\\ and School of Mathematics, Institute for Research in Fundamental Sciences (IPM), P.O. Box: 19395-5746, Tehran, Iran}
\email{nasr$_{-}$a@sci.ui.ac.ir / nasr@ipm.ir}

\subjclass[2010]{{16D70}, {16G60}, {16G10}}

\keywords{}

\begin{abstract}
We study the Auslander ring of a basic left K\"othe ring $\Lambda$ and give a characterization of basic left K\"othe rings in terms of their Auslander rings. We also study the functor category Mod$((\Lambda$-mod$)^{\rm op})$ and characterize basic left K\"othe rings $\Lambda$ by using functor categories Mod$((\Lambda$-mod$)^{\rm op})$. As a consequence we show that there exists a bijection between the Morita equivalence classes of left Kawada rings and the Morita equivalence classes of Auslander generalized right QF-2 rings.
\end{abstract}

\maketitle

%%%%%%%%%%%%%%%%%%%%%%%%%%%%%%%%%%%%%%
\section{Introduction}

A ring $\Lambda$ is said to be {\it of finite representation type} or {\it representation-finite} if it is left artinian and there are only finitely many finitely generated indecomposable  left $\Lambda$-modules up to isomorphism. The class of representation-finite rings is one of the most important class of rings. Auslander in his famous theorem, which is called Auslander correspondence, provided a bijection between the set of Morita equivalence classes of representation-finite rings and that of rings with nice homological properties. Let $\Gamma$ be an artinian ring. The {\it global dimension} of $\Gamma$, which is denoted by ${\rm gl.dim}(\Gamma)$, is at most $n$ if the projective dimension of each left $\Gamma$-module is less than or equal $n$. The dominant dimension of $\Gamma$, which is denoted by ${\rm dom.dim}(\Gamma)$, is at least $n$ if the first $n$ terms in the minimal injective resolution of $_{\Gamma}\Gamma$ are projectives. An artinian ring $\Gamma$ is called an {\it Auslander ring} if  ${\rm gl.dim}(\Gamma) \leq 2$ and ${\rm dom.dim}(\Gamma) \geq 2$. According to the Auslander correspondence  \cite[Corollary 4.7]{ausla2}, there exists a bijection between the set of Morita equivalence classes of representation-finite rings and that of Auslander rings $\Gamma$. It is given by $\Lambda\rightarrow\Gamma:=\End_{\Lambda}(M)$, where $M=\bigoplus_{i=1}^nM_i$ and $\lbrace M_1, \cdots, M_n\rbrace$ is a complete set of representative of the isomorphic classes of finitely generated indecomposable left $\Lambda$-modules. $M$ is called the Auslander generator of $\Lambda$-mod. Auslander correspondence gives a very nice connection between concept of representation-finiteness which is a representation theoretic property and the concept of Auslander ring which is a homological property. On the other hand, the category Mod-$\Gamma$ is equivalent to the functor category Mod$((\Lambda$-mod$)^{\rm op})$, where Mod$((\Lambda$-mod$)^{\rm op})$ is the category of all additive covariant functors from $\Lambda$-mod to the category of abelian groups. This equivalence propose to use the functor categories for studying the representation-finite rings which leads
to functorial approach in representation theory (see \cite{Au, ausla2}).
A functor $F\in$ Mod$((\Lambda$-mod$)^{\rm op})$ is called noetherian (resp., artinian) if it satisfies the
ascending (resp., descending) chain condition on subfunctors. $F$ is called finite if it is both noetherian and artinian and $F$ is called locally finite if every finitely generated
subfunctor of F is finite. The category  Mod$((\Lambda$-mod$)^{\rm op})$ is said to be locally finite if every $F\in$ Mod$((\Lambda$-mod$)^{\rm op})$ is locally finite. Auslander in Theorem 3.1 of \cite{ausla2} proved that a ring $\Lambda$ is of finite representation type if and only if Mod$((\Lambda$-mod$)^{\rm op})$ is locally finite if and only if every representable functor of Mod$((\Lambda$-mod$)^{\rm op})$ is both artinian and noetherian (see also \cite{aus}).

It is known that every finitely generated ${\Bbb Z}$-module decompose to the finite direct sum of cyclic modules. Artinian rings with this property are representation-finite rings. These rings are precisely rings over which every module is a direct sum of cyclic modules. A ring $\Lambda$ is called {\it left K\"othe} if every left $\Lambda$-module is a direct sum of cyclic modules. K\"othe studied this class of rings which is an important subclass of representation-finite rings. He showed that commutative artinian rings which have this property are serial. Also he posed the question to classify the non-commutative rings with this property \cite{K} (see also \cite{Jain} and references therein). Kawada completely solved the K\"othe's problem for the basic finite dimensional $K$-algebras. Kawada's papers contain a set of 19 conditions which characterize Kawada algebras, as well as, the list of all possible finitely generated indecomposable modules \cite{Kawada1, Kawada2, Kawada3}.  A ring $\Lambda$ is called {\it left Kawada} if any ring Morita equivalent to $\Lambda$ is a left K\"othe ring \cite{r}. Ringel showed that any finite dimensional $K$-algebra of finite representation type is Morita equivalent to a K\"othe algebra. By using of the multiplicity-free of top and soc of finitely generated indecomposable modules, he also gave a characterization of Kawada algebras \cite{r}. In \cite{za} the authors proved the Ringel's result for artinian rings.

Since any left K\"othe ring is representation-finite, the above results of Auslander propose to use the Auslander ring and the functor category Mod$((\Lambda$-mod$)^{\rm op})$ for study left K\"othe rings. In this paper we study left K\"othe rings by using their Auslander rings and their functor categories.
We say that a semiperfect ring with enough idempotents $R$ is {\it generalized left {\rm (}resp., right{\rm )} QF-2} if every indecomposable projective unitary left (resp., right) $R$-module $P$ has multiplicity-free socle (i.e. the composition factors of soc$(P)$ are pairwise non-isomorphic). In the following theorem, by using the notion of generalized right QF-2 rings, we give a characterization of basic left K\"othe rings.

\newtheorem{thm}{Theorem}
\begin{theorem a*}$($See Theorem \ref{r33}$)$
Let $\Lambda$ be a basic ring. Then the following conditions are equivalent.
\begin{itemize}
\item[$(a)$] $\Lambda$ is a left K\"othe ring.
\item[$(b)$] $\Lambda$ is a ring of finite representation type and the Auslander ring $T$ of $\Lambda$ is a generalized right QF-2 ring.
\item[$(c)$] $\Lambda$ is a left artinian ring and $T=\widehat{{\rm End}}_{\Lambda}(V)$ is a left locally finite generalized right QF-2 ring, where $V$ is a direct sum of modules in a complete set of representative of the isomorphic classes of  finitely generated indecomposable left $\Lambda$-modules.
\item[$(d)$] $\Lambda$ is a left artinian ring and for each indecomposable left $\Lambda$-module $M$, ${\rm Hom}_{\Lambda}(M,-) \simeq \varphi(-)$ for some positive-primitive formula $\varphi(x)$ over $\Lambda$.
\item[$(e)$] $\Lambda$ is a left artinian ring and every indecomposable projective object in  Mod$((\Lambda$-mod$)^{\rm op})$ has finitely generated essential multiplicity-free socle.
\end{itemize}
\end{theorem a*}

Using the above theorem, we prove the following theorem, which gives a characterization of left Kawada rings.

\begin{theorem b*}$($See Theorem \ref{ka}$)$
There exists a bijection between the Morita equivalence classes of left Kawada rings and the Morita equivalence classes of Auslander generalized right QF-2 rings.
\end{theorem b*}

Before proving our main results in Section 3 we prove some preliminary results in the following section.
%%%%%%%%%%%%%%%%%%%%%%%%%%%%%%%%%%%%%%
\subsection{Notation }
Throughout this paper all rings are associative with unit unless otherwise stated. Let $R$ be a ring (not necessary with unit). We write all homomorphisms of left (resp., right) $R$-modules on the right (resp., left), so $fg$ (resp., $g \circ f$) means ``first $f$ then $g$".  We denote by $R$-Mod (resp., Mod-$R$) the category of all left (resp., right) $R$-modules and by $J(R)$ the Jacobson radical of $R$.  Also we denote by $R$-mod (resp., mod-$R$) the category of all finitely generated left (resp., right) $R$-modules. A left (resp., right) $R$-module $M$ is called {\it unitary} if $RM=M$ (resp., $MR=M$). We denote by $R$Mod (resp., Mod$R$) the category of all unitary left (resp., right) $R$-modules. We denote by Mod$((R$-mod$)^{\rm op})$ the category of all additive covariant functors from $R$-mod to the category of abelian groups, which is an abelian category. We write $\bigoplus_A M$ and $\prod_A M$ for the direct sum of card$(A)$ copies of an $R$-module $M$ and the direct product of card$(A)$ copies of an $R$-module $M$, respectively. For a left $R$-module $V$ we denoted by ${\rm Add}(V)$ the full subcategory of $R$-Mod whose objects are all left $R$-modules that are isomorphic to direct summands of $\bigoplus_A V$ for any set $A$. We denoted by ${\rm Proj}(R)$ (resp., ${\rm Proj}(R^{\rm op})$) the full subcategory of $R$Mod (resp., Mod$R$) whose objects are projective left (resp., right) $R$-modules. Let $N$ be a unitary left $R$-module. We denote by $E(N)$, ${\rm rad}(N)$, ${\rm top}(N)$ and ${\rm soc}(N)$ the injective hull of $N$ in $R$Mod, radical of $N$, top of $N$ and socle of $N$, respectively. Let $f : X \rightarrow Y$ be an $R$-module homomorphism and assume that $M$ is an $R$-submodule of $X$. We denote by $f\mid_M$ the restriction of $f$ to $M$. Let  $L$ and $N$ be two left $R$-modules. The submodule ${\rm Re}(L, N)=\bigcap \lbrace {\rm Ker}f~|~ f\in {\rm Hom}_{R}(L,N) \rbrace$ is called {\it reject of $N$ in $L$}.

\section{Preliminaries}

 Let $\mathcal{U}$ be a non-empty set of left $\Lambda$-modules. A left $\Lambda$-module $M$ is said to be {\it generated} by $\mathcal{U}$ if for every pair of distinct morphisms $f,g : M \rightarrow B$ in $\Lambda$-Mod there exists a morphism $h : U \rightarrow M$ with $U \in \mathcal{U}$ such that  $h f \neq hg$. Also, $\mathcal{U}$ is called a {\it generating set} for $\Lambda$-Mod if every left $\Lambda$-module generated by $\mathcal{U}$. A left $\Lambda$-module $U$ is called {\it generator} in $\Lambda$-Mod if set $\mathcal{U}=\lbrace U \rbrace$  is a generating set for $\Lambda$-Mod (see \cite[Ch. V, Sect. 7]{s}). Let $\Lambda$ be a ring of finite representation type and $\lbrace X_1, \cdots, X_n\rbrace$ be a complete set of representative of the isomorphic classes of finitely generated indecomposable left $\Lambda$-modules. We recall that $X=\bigoplus_{i=1}^nX_i$ is called an Auslander generator of $\Lambda$-mod. Also the endomorphism ring of $X$ is called {\it Auslander ring of} $\Lambda$. Note that by \cite[Proposition 3.6]{ausla2}, the Auslander ring of $\Lambda$ is an artinian ring. \\

Let $\Lambda$ be a ring and $\lbrace V_{\alpha}~|~ \alpha \in J \rbrace$ be a family of finitely generated left $\Lambda$-modules. Set $V=\bigoplus_{\alpha \in J} V_{\alpha}$ and for each $\alpha \in J$, letting $e_{\alpha}=\pi_{\alpha}\varepsilon_{\alpha}$, where $\pi_{\alpha}:V \rightarrow V_{\alpha}$ is the canonical projection and $\varepsilon_{\alpha}:V_{\alpha} \rightarrow V$ is the canonical injection. For each left $\Lambda$-module $X$, we define as in \cite[Page 40]{fu}, $\widehat{{\Hom}}_{\Lambda}(V,X)=\lbrace f \in {\rm Hom}_{\Lambda}(V,X)~|~(V)e_{\alpha} f=0~ {\rm for ~almost~ all}~ \alpha \in J \rbrace$. For $X=V$, we write $\widehat{{\Hom}}_{\Lambda}(V,V)=\widehat{{\rm End}}_{\Lambda}(V)$.  Let $R$ be a ring (not necessary with unit). $R$ is called a {\it ring with enough idempotents} if there exists a family $\lbrace q_{\alpha}~|~\alpha \in I \rbrace$ of pairwise orthogonal idempotents of $R$ with $R=\bigoplus_{\alpha \in I}Rq_{\alpha}=\bigoplus_{\alpha \in I}q_{\alpha}R$ (see \cite[Page 39]{fu}).  $T=\widehat{{\rm End}}_{\Lambda}(V)$ is a ring with enough idempotents because of $T=\bigoplus_{\alpha \in J}Te_{\alpha}=\bigoplus_{\alpha \in J}e_{\alpha}T$. Fuller in \cite[Page 40]{fu} defined a covariant functor $\widehat{{\Hom}}_{\Lambda}(V,-):\Lambda$-Mod$\rightarrow T$Mod as follows. For any morphism $f: X \rightarrow Y$ in $\Lambda$-Mod, he defined $\widehat{{\rm Hom}}_{\Lambda}(V,f): \widehat{{\rm Hom}}_{\Lambda}(V,X) \rightarrow \widehat{{\rm Hom}}_{\Lambda}(V,Y)$ via $g \mapsto gf$. From \cite[Pages 40-41]{fu} we observe that the covariant functor $\widehat{{\Hom}}_{\Lambda}(V,-)$ is a left exact functor and also preserves direct sums.  Moreover it is an additive equivalence between the full subcategory ${\rm Add}(V)$ of $\Lambda$-Mod and the full subcategory Proj$(T)$ of $T$Mod with the inverse equivalence $V \otimes_T -$.  Note that the covariant functor $\widehat{{\Hom}}_{\Lambda}(V,-):\Lambda$-Mod$\rightarrow T$Mod preserves indecomposable modules, injective modules and essential extensions when $V$ is a generator in $\Lambda$-Mod (see \cite[Proposition 51.7]{wi}). Also it is easily seen that $V$ is a generator in $\Lambda$-Mod when $\Lambda$ is left artinian and $\lbrace V_{\alpha}~|~ \alpha \in J \rbrace$ is a complete set of representative of the isomorphic classes of finitely generated indecomposable left $\Lambda$-modules. By using \cite[Proposition 4.2 and Corollary 4.8]{ausla2} and \cite[Proposition 46.7]{wi} we have the following result.

\begin{Lem}\label{p0}
Let $\Lambda$ be a ring of finite representation type, $X$ be an Auslander generator and $T={\rm End}_{\Lambda}(X)$. Then
\begin{itemize}
\item[$(a)$] The functor ${\rm Hom}_{\Lambda}(X,-): \Lambda$-{\rm Mod}$ \rightarrow${\rm Proj}$(T)$ is an equivalence which preserves and reflects finitely generated modules.
\item[$(b)$] The functor ${\rm Hom}_{\Lambda}(X,-)$ preserves essential extension.
\item[$(c)$] If $E$ is an injective left $\Lambda$-module, then ${\rm Hom}_{\Lambda}(X,E)$ is an injective left $T$-module.
\end{itemize}
\end{Lem}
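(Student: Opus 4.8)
The plan is to deduce all three parts from the hypothesis of finite representation type together with the standard functorial properties of ${\rm Hom}_{\Lambda}(X,-)$ recorded in \cite{ausla2} and \cite{wi}.

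For part $(a)$, the crucial input is that over a ring of finite representation type \emph{every} left $\Lambda$-module---not merely the finitely generated ones---is a direct sum of finitely generated indecomposable modules, so that $\Lambda$-Mod coincides with ${\rm Add}(X)$, where $X$ is the Auslander generator. This is what I would invoke from \cite[Proposition 4.2]{ausla2}. Granting it, I would apply the projectivization equivalence: for a module $X$ with $T={\rm End}_{\Lambda}(X)$, the functor ${\rm Hom}_{\Lambda}(X,-)$ restricts to an additive equivalence between ${\rm Add}(X)$ and ${\rm Proj}(T)$ with quasi-inverse $X\otimes_T-$, which is the content of \cite[Corollary 4.8]{ausla2}. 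Combining the two facts gives that ${\rm Hom}_{\Lambda}(X,-)\colon \Lambda$-Mod$\rightarrow{\rm Proj}(T)$ is an equivalence.

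For the preservation and reflection of finite generation I would argue through the decomposition theory. If $M$ is finitely generated then $M\cong\bigoplus_{i=1}^{n}X_i^{m_i}$ is a finite direct sum of the indecomposables, so ${\rm Hom}_{\Lambda}(X,M)\cong\bigoplus_{i=1}^{n}{\rm Hom}_{\Lambda}(X,X_i)^{m_i}$ is a finite direct sum of the indecomposable projective $T$-modules ${\rm Hom}_{\Lambda}(X,X_i)$, hence a finitely generated projective $T$-module. Conversely, a finitely generated object of ${\rm Proj}(T)$ is a finite direct sum of these indecomposable projectives, so under the equivalence its preimage is a finite direct sum of the $X_i$, i.e.\ finitely generated; this yields reflection. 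Both directions are packaged in \cite[Corollary 4.8]{ausla2}.

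For parts $(b)$ and $(c)$ I would pass to the generator property. As already noted in the Preliminaries, $X$ is a generator in $\Lambda$-Mod, since $\Lambda$ is left artinian and $X$ is the direct sum over a complete set of representatives of the finitely generated indecomposables (equivalently, each indecomposable projective occurs among the $X_i$, so $\Lambda\in{\rm add}(X)$). Because $X$ is a finite direct sum, the functor $\widehat{{\rm Hom}}_{\Lambda}(X,-)$ of the Preliminaries coincides with ${\rm Hom}_{\Lambda}(X,-)$ and $T$ is an ordinary ring with unit. I would then quote the fact already recorded above that, for a generator $V$, the functor $\widehat{{\rm Hom}}_{\Lambda}(V,-)$ preserves essential extensions and injective modules (see \cite[Proposition 51.7]{wi}); applied with $V=X$ this gives $(b)$ and $(c)$ simultaneously.

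The main obstacle is conceptual rather than computational: everything hinges on the decomposition theorem $\Lambda$-Mod$={\rm Add}(X)$ for rings of finite representation type, which is what upgrades the projectivization equivalence from ${\rm Add}(X)\simeq{\rm Proj}(T)$ to an equivalence on all of $\Lambda$-Mod, and what lets finitely generated modules be handled via finite decompositions. Once that input is cited, parts $(a)$--$(c)$ are assembled from the standard properties of the Hom functor attached to an (add-)generator, so no delicate estimates remain.
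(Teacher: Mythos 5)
Your proposal is correct and follows essentially the same route as the paper, which simply assembles the lemma from Auslander's decomposition theorem for representation-finite rings together with the projectivization equivalence ${\rm Add}(X)\simeq{\rm Proj}(T)$ and the generator properties of ${\rm Hom}_{\Lambda}(X,-)$ recorded in the Preliminaries (citing \cite{ausla2} and Wisbauer). The only cosmetic difference is the exact attribution of which cited proposition supplies which ingredient, which does not affect the argument.
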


A ring $\Lambda$ is called {\it semiperfect} if $\Lambda/J(\Lambda)$ is a semisimple ring and idempotents in $\Lambda/J(\Lambda)$ can be lifted to $\Lambda$ (see \cite[Page 303]{an}). It is well-known that a ring with enough idempotents $R$ is semiperfect if and only if every finitely generated unitary left $R$-module has a projective cover in $R$Mod if and only if every finitely generated projective unitary left $R$-module is a direct sum of local modules (see \cite[Page 95]{f} and \cite[Proposition 49.10]{wi}).\\

A semiperfect ring $\Lambda$ is called {\it left {\rm (}resp., right{\rm )} QF-2} if every indecomposable projective left (resp., right) $\Lambda$-module has a simple essential socle (see \cite[Sect. 4]{f}).  A left $\Lambda$-module $M$ has {\it multiplicity-free socle} if composition factors of soc$(M)$ are pairwise non-isomorphic (see \cite[Sect. 1]{r}).  We say that a semiperfect ring with enough idempotents $R$ is  {\it generalized left {\rm (}resp., right{\rm )} QF-2} if every indecomposable projective unitary left (resp., right) $R$-module has multiplicity-free socle.

\begin{Pro}\label{r1}
Let $\Lambda$ be a ring of finite representation type.  Then the following conditions are equivalent.
\begin{itemize}
\item[$(a)$] The Auslander ring of $\Lambda$ is a generalized left QF-2 ring.
\item[$(b)$] Every indecomposable left $\Lambda$-module has multiplicity-free socle.
\end{itemize}
\end{Pro}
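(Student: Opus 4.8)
The plan is to transport the socle structure from $\Lambda$-modules to $T$-modules via the equivalence ${\rm Hom}_{\Lambda}(X,-)$ supplied by Lemma \ref{p0}, and then match up indecomposable projective $T$-modules with indecomposable $\Lambda$-modules. Let $X=\bigoplus_{i=1}^n X_i$ be an Auslander generator and $T={\rm End}_{\Lambda}(X)$. Since $\Lambda$ is left artinian, every indecomposable $\Lambda$-module is (up to isomorphism) one of the $X_i$, and by Lemma \ref{p0}$(a)$ the functor ${\rm Hom}_{\Lambda}(X,-)$ is an equivalence from $\Lambda$-Mod onto ${\rm Proj}(T)$ sending each $X_i$ to an indecomposable projective left $T$-module $P_i={\rm Hom}_{\Lambda}(X,X_i)$; conversely every indecomposable projective left $T$-module arises this way. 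So $(a)$ and $(b)$ will be equivalent once I show that, for each $i$, the $T$-module $P_i$ has multiplicity-free socle if and only if the $\Lambda$-module $X_i$ does.

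The key step is therefore a socle comparison under the functor. First I would argue that ${\rm soc}$ is preserved: ${\rm soc}(M)$ is the largest semisimple submodule, equivalently the reject-type intersection of maximal submodules dualized, and by Lemma \ref{p0}$(b)$ the functor preserves essential extensions, so it carries the essential socle of $X_i$ to the socle of $P_i$ (an essential simple-or-semisimple submodule stays essential and semisimple under an exact, fully faithful functor that reflects simples). Concretely, I would show that a $\Lambda$-module $S$ is simple exactly when ${\rm Hom}_{\Lambda}(X,S)$ is a simple $T$-module: because the functor is fully faithful and preserves the lattice of submodules on its image, and because ${\rm Hom}_{\Lambda}(X,-)$ restricted to submodules of $X_i$ (which are again finitely generated, hence in the image) reflects the submodule lattice, simple subobjects correspond to simple subobjects. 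Hence composition factors of ${\rm soc}(X_i)$ correspond bijectively, preserving isomorphism type, to composition factors of ${\rm soc}(P_i)$.

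Once that correspondence is in place the equivalence is immediate: ${\rm soc}(X_i)$ has pairwise non-isomorphic composition factors if and only if ${\rm soc}(P_i)$ does, i.e. $X_i$ has multiplicity-free socle iff the corresponding indecomposable projective $P_i$ does. Ranging over all $i$, every indecomposable $\Lambda$-module has multiplicity-free socle precisely when every indecomposable projective left $T$-module has multiplicity-free socle, which is exactly the statement that $T$ is a generalized left QF-2 ring. I also need $T$ to be semiperfect with enough idempotents so that ``generalized left QF-2'' is meaningful; this follows from the finite representation type of $\Lambda$, since $T$ is then an artinian ring (with identity, hence trivially with enough idempotents) by \cite[Proposition 3.6]{ausla2}.

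I expect the main obstacle to be the careful verification that ${\rm Hom}_{\Lambda}(X,-)$ genuinely reflects simplicity and matches isomorphism types of composition factors, rather than merely preserving semisimplicity. The delicate point is that the equivalence lands in ${\rm Proj}(T)$, not all of $T$-Mod, so I must confirm that all the relevant submodules of $X_i$ (its socle and the simple summands thereof) have images that still lie in the subcategory on which the functor is an equivalence, and that distinct isomorphism classes of simple $\Lambda$-modules go to distinct isomorphism classes of simple $T$-modules. This bookkeeping is where the proof's real content lies; the rest is formal.
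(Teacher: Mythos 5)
Your overall strategy (transport the problem to $T$ via ${\rm Hom}_{\Lambda}(X,-)$ and match indecomposable projective $T$-modules with indecomposable $\Lambda$-modules) is the same as the paper's, and your final conclusion is the true statement the paper proves. But the central step you propose is false as stated: ${\rm Hom}_{\Lambda}(X,-)$ does \emph{not} send simple $\Lambda$-modules to simple $T$-modules, and it does \emph{not} carry ${\rm soc}(X_i)$ to ${\rm soc}(P_i)$. The equivalence lands in ${\rm Proj}(T)$, so the image of a simple $\Lambda$-module $S$ is an indecomposable \emph{projective} $T$-module ${\rm Hom}_{\Lambda}(X,S)$, which is almost never simple (already for $\Lambda=k[x]/(x^2)$ with $X=k\oplus\Lambda$, the module ${\rm Hom}_{\Lambda}(X,k)$ is $2$-dimensional). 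Likewise the functor is not full onto submodule lattices: a $T$-submodule of ${\rm Hom}_{\Lambda}(X,M)$ need not have the form ${\rm Hom}_{\Lambda}(X,N)$, so ``simple subobjects correspond to simple subobjects'' does not follow from full faithfulness on ${\rm Add}(X)$. You flagged exactly this as the delicate point, but the resolution you sketch (reflection of simplicity) is the thing that fails.

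The paper's proof circumvents this by never comparing socles directly; it compares \emph{injective hulls}. Since ${\rm soc}(M)=\bigoplus_{i=1}^n T_i$ is essential, $E(M)\cong\bigoplus_{i=1}^n E(T_i)$, and Lemma \ref{p0}(b),(c) give $E({\rm Hom}_{\Lambda}(X,M))\cong{\rm Hom}_{\Lambda}(X,E(M))\cong\bigoplus_{i=1}^n{\rm Hom}_{\Lambda}(X,E(T_i))$, where each summand has local endomorphism ring. Writing ${\rm soc}({\rm Hom}_{\Lambda}(X,M))=\bigoplus_{i=1}^t S_i$ gives a second decomposition $E({\rm Hom}_{\Lambda}(X,M))\cong\bigoplus_{i=1}^t E(S_i)$; Krull--Schmidt forces $t=n$ and $E(S_i)\cong{\rm Hom}_{\Lambda}(X,E(T_i))$, and since a simple module is recovered as the socle of its injective hull, repetitions among the $T_i$ correspond exactly to repetitions among the $S_i$. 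That is the bridge your argument is missing: the multiplicity pattern of the socle is read off from the indecomposable decomposition of the injective hull, which the functor \emph{does} preserve, rather than from the socle itself, which it does not. To repair your proof you would need to replace the ``socle is preserved'' paragraph with this injective-hull comparison (or an equivalent device).
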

\begin{proof}
Let $X$ be an Auslander generator of $\Lambda$-mod and $T = {\rm End}_{\Lambda}(X)$. Assume that $M$ is a finitely generated left $\Lambda$-module. Since $\Lambda$ is left artinian, by \cite[Propositions 21.3 and 31.4]{wi}, soc$(M)$ is a finitely generated and essential submodule of $M$. It is easy to see that soc$(M)=\bigoplus_{i=1}^nT_i$, where $n \in {\Bbb{N}}$ and each $T_i$ is a simple left $\Lambda$-module. Since soc$(M)$ is an essential submodule of $M$,  we get $E(M) \cong \bigoplus_{i=1}^n E(T_i)$ as $\Lambda$-modules. On the other hand, by Lemma \ref{p0}(b), there exists an essential monomorphism  ${\rm Hom}_{\Lambda}(X,M) \rightarrow {\rm Hom}_{\Lambda}(X,E(M))$ as $T$-modules. Also by Lemma \ref{p0}(c), we see that ${\rm Hom}_{\Lambda}(X,E(M))$ is an injective left $T$-module. Therefore $$E({\rm Hom}_{\Lambda}(X,M)) \cong {\rm Hom}_{\Lambda}(X,E(M)) \cong \bigoplus_{i=1}^n {\rm Hom}_{\Lambda}(X,E(T_i))$$ as $T$-modules. ${\rm End}_{T}({\rm Hom}_{\Lambda}(X,E(T_i)))$ is a local ring because the functor
\begin{center}
${\rm Hom}_{\Lambda}(X,-):\Lambda$-Mod$ \rightarrow$Proj$(T)$
\end{center}
is an equivalence and ${\rm End}_{\Lambda}(E(T_i))$ is a local ring. \\
$(a)\Rightarrow (b)$. Let $M$ be an indecomposable left $\Lambda$-module.  Since $\Lambda$ is a ring of finite representation type, by \cite[Corollary 4.8]{ausla2}, $M$ is finitely generated. Hence soc$(M)=\bigoplus_{i=1}^nT_i$, where $n \in {\Bbb{N}}$ and each $T_i$ is a simple left $\Lambda$-module. We show that $T_i \ncong T_j$ when $i \neq j$.  By Lemma \ref{p0}(a), ${\rm Hom}_{\Lambda}(X,M)$ is a finitely generated indecomposable projective left $T$-module. Since $T$ is an artinian ring,  ${\rm soc}({\rm Hom}_{\Lambda}(X,M))= \bigoplus_{i=1}^tS_i$, where $t \in {\Bbb{N}}$, each $S_i$ is a simple left $T$-module.  Moreover  $E({\rm Hom}_{\Lambda}(X,M)) \cong \bigoplus_{i=1}^tE(S_i)$ as $T$-modules. By assumption, $S_i \ncong S_j$ when $i \neq j$. On the other hand, by above statement $E({\rm Hom}_{\Lambda}(X,M)) \cong \bigoplus_{i=1}^n {\rm Hom}_{\Lambda}(X,E(T_i))$ as $T$-modules, where $n \in {\Bbb{N}}$, each $T_i$ is a simple left $\Lambda$-module and soc$(M)=\bigoplus_{i=1}^nT_i$. Also ${\rm End}_{T}({\rm Hom}_{\Lambda}(X,E(T_i)))$ is a local ring for each $1 \leq i \leq n$. Therefore  $t=n$ and for each $1 \leq i \leq n$, $E(S_i) \cong {\rm Hom}_{\Lambda}(X,E(T_i))$ as $T$-modules. We can certainly conclude that  $T_i \ncong T_j$ when $i \neq j$ since otherwise  $E(T_i) \cong E(T_j)$ as $\Lambda$-modules for some $i \neq j$. This implies that ${\rm Hom}_{\Lambda}(X,E(T_i)) \cong {\rm Hom}_{\Lambda}(X,E(T_j))$ as $T$-modules. $E(S_i) \cong E(S_j)$ yields $S_i \cong S_j$ which is a contradiction.  Therefore every indecomposable left $\Lambda$-module has multiplicity-free socle.\\
$(b)\Rightarrow (a)$. Let $P$ be an indecomposable projective left $T$-module. Then $P$ is a finitely generated left $T$-module because $T$ is an artinian ring.  Hence by Lemma \ref{p0}(a), $P \cong {\rm Hom}_{\Lambda}(X,M)$ for some a finitely generated indecomposable left $\Lambda$-module $M$. Then $E(P) \cong E({\rm Hom}_{\Lambda}(X,M)) \cong \bigoplus_{i=1}^n {\rm Hom}_{\Lambda}(X,E(T_i))$ and ${\rm End}_{T}({\rm Hom}_{\Lambda}(X,E(T_i)))$ is a local ring for each $1 \leq i \leq n$, where $n \in {\Bbb{N}}$, ${\rm soc}(M)=\bigoplus_{i=1}^n T_i$ and each $T_i$ is a simple left $\Lambda$-module. On the other hand, ${\rm soc}(P) = \bigoplus_{i=1}^s S_i$ and $E(P) \cong \bigoplus_{i=1}^s E(S_i)$, where $s \in {\Bbb{N}}$ and each $S_i$ is a simple left $T$-module. Therefore $s=n$ and $E(S_i) \cong {\rm Hom}_{\Lambda}(X,E(T_i))$ for each $1 \leq i \leq n$. If $S_i \cong S_j$ as $T$-modules for some $i \neq j$, then $E(S_i) \cong E(S_j)$ as $T$-modules and hence  ${\rm Hom}_{\Lambda}(X,E(T_i)) \cong {\rm Hom}_{\Lambda}(X,E(T_j))$ as $T$-modules. So by Lemma \ref{p0}(a), we have $E(T_i) \cong E(T_j)$ as $\Lambda$-modules. It follows that $T_i \cong T_j$ as $\Lambda$-modules which is a contradiction. Consequently $T$ is a generalized left QF-2 ring and the result follows.
\end{proof}

Remark \ref{se} and Lemma \ref{l1} are well-known to experts, but since we could not find a suitable reference in the literature, we include these results here.

\begin{Rem}\label{se}
{\rm  Let $\Lambda$ be a left artinian ring and $\lbrace V_{\alpha}~|~\alpha \in J \rbrace$ be a family of finitely generated indecomposable left $\Lambda$-modules. Set $V=\bigoplus_{\alpha \in J}V_{\alpha}$. Let $\pi_{\alpha}: V \rightarrow V_{\alpha}$ be the canonical projection and $\varepsilon_{\alpha}: V_{\alpha} \rightarrow V$ be the canonical injection. Set $e_{\alpha}=\pi_{\alpha}\varepsilon_{\alpha}$ for each $\alpha \in J$. Then $T=\widehat{{\rm End}}_{\Lambda}(V) =\bigoplus_{\alpha \in J}Te_{\alpha}=\bigoplus_{\alpha \in J}e_{\alpha}T$. Since $\widehat{{\Hom}}_{\Lambda}(V,-): {\rm Add}(V) \rightarrow {\rm Proj}(T)$ is an equivalence of categories and $Te_{\alpha} \cong \widehat{{\rm Hom}}_{\Lambda}(V, V_{\alpha})$ as $T$-modules we have for each $\alpha \in J$,  $e_{\alpha}Te_{\alpha} \cong {\rm End}_{\Lambda}(V_{\alpha})$ as rings.  $V_{\alpha}$ has finite length and also it is indecomposable, then by \cite[Proposition 32.4(3)]{wi}, $e_{\alpha}Te_{\alpha}$ is a local ring. It follows that every finitely generated projective unitary left $T$-module is a direct sum of local modules. Consequently $T$ is a semiperfect ring.}
\end{Rem}

\begin{Lem}\label{l1}
Let $\Lambda$ be a ring and assume that $\lbrace V_{\alpha}~|~\alpha \in I \rbrace$ is a family of non-isomorphic finitely generated indecomposable left $\Lambda$-modules. Set $T=\widehat{{\rm End}}_{\Lambda}(V)$, where $V=\bigoplus_{\alpha \in I}V_{\alpha}$. If $\Lambda$ is a left artinian ring, then  $\lbrace Te_{\alpha}/{\rm rad}(Te_{\alpha})~|~\alpha \in I \rbrace$ is a complete set of representative of the isomorphic classes of  simple unitary left $T$-modules, where $e_{\alpha}=\pi_{\alpha}\varepsilon_{\alpha}$, $\pi_{\alpha}: V \rightarrow V_{\alpha}$ is the canonical projection and $\varepsilon_{\alpha}: V_{\alpha} \rightarrow V$ is the canonical injection.
\end{Lem}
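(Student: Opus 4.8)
The plan is to run the classical structure theory of semiperfect rings through the equivalence $\widehat{{\rm Hom}}_{\Lambda}(V,-)\colon {\rm Add}(V)\rightarrow {\rm Proj}(T)$ recalled above, being careful to stay inside the ``enough idempotents'' framework. First I would record, exactly as in Remark \ref{se}, that $T$ is a semiperfect ring with enough idempotents $\lbrace e_{\alpha}\mid \alpha\in I\rbrace$ and that $e_{\alpha}Te_{\alpha}\cong {\rm End}_{\Lambda}(V_{\alpha})$ is a local ring for every $\alpha\in I$; thus each $e_{\alpha}$ is a local idempotent of $T$. Consequently $Te_{\alpha}$ is an indecomposable projective unitary left $T$-module which is local, i.e. it has $\,{\rm rad}(Te_{\alpha})=J(T)e_{\alpha}$ as its unique maximal submodule, so that $Te_{\alpha}/{\rm rad}(Te_{\alpha})$ is a simple unitary left $T$-module. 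This settles that every member of the proposed list is indeed simple.

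Next I would prove completeness of the list. Let $S$ be an arbitrary simple unitary left $T$-module. Since $T$ is semiperfect, $S$ has a projective cover $p\colon P\rightarrow S$ in $T{\rm Mod}$, and because $S$ is simple, $P$ is an indecomposable projective unitary left $T$-module. As $T=\bigoplus_{\alpha\in I}Te_{\alpha}$ is a direct sum of modules with local endomorphism rings, Azumaya's version of the Krull--Schmidt theorem forces $P\cong Te_{\alpha}$ for some $\alpha\in I$. Therefore $S\cong P/{\rm rad}(P)\cong Te_{\alpha}/{\rm rad}(Te_{\alpha})$, so the list exhausts all simple unitary left $T$-modules.

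Finally I would check that the list has no repetitions. Suppose $Te_{\alpha}/{\rm rad}(Te_{\alpha})\cong Te_{\beta}/{\rm rad}(Te_{\beta})$. Since $Te_{\alpha}$ and $Te_{\beta}$ are projective covers of these simple modules, uniqueness of projective covers in the semiperfect setting yields $Te_{\alpha}\cong Te_{\beta}$ as $T$-modules. Using $Te_{\alpha}\cong \widehat{{\rm Hom}}_{\Lambda}(V,V_{\alpha})$ together with the fact that $\widehat{{\rm Hom}}_{\Lambda}(V,-)\colon {\rm Add}(V)\rightarrow {\rm Proj}(T)$ is an equivalence of categories, and hence reflects isomorphisms, this gives $V_{\alpha}\cong V_{\beta}$ in $\Lambda$-Mod; as the $V_{\alpha}$ were chosen pairwise non-isomorphic, $\alpha=\beta$.

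I expect the main obstacle to be not any single computation but the need to transport the classical semiperfect dictionary---local idempotents yield local projectives, indecomposable projectives biject with simples via $P\mapsto P/{\rm rad}(P)$, and projective covers are unique---entirely into the non-unital ``enough idempotents'' setting, where one must consistently restrict to unitary modules and invoke the appropriate versions of these facts from \cite{f} and \cite{wi}. The place where the hypothesis that $\Lambda$ is left artinian is genuinely used is in establishing that each $e_{\alpha}$ is local: being left artinian guarantees that each indecomposable $V_{\alpha}$ has finite length, so ${\rm End}_{\Lambda}(V_{\alpha})$ is local by the Fitting-type argument behind Remark \ref{se}.
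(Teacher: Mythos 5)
Your proposal is correct and follows essentially the same route as the paper: both establish via Remark \ref{se} that $T$ is semiperfect with each $Te_{\alpha}$ a local projective, deduce simplicity of $Te_{\alpha}/{\rm rad}(Te_{\alpha})$, use uniqueness of projective covers to rule out repetitions (pulling $Te_{\alpha}\cong Te_{\beta}$ back to $V_{\alpha}\cong V_{\beta}$ through the equivalence), and obtain completeness by identifying the projective cover of an arbitrary simple unitary module with some $Te_{\alpha}$. Your explicit appeal to Azumaya's theorem merely makes precise the step the paper leaves as ``it follows that $P\cong Te_{\alpha}$.''
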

\begin{proof}
By Remark \ref{se}, ${\rm End}_T(Te_{\alpha})$ is a local ring.  $Te_{\alpha}$ is a projective unitary left $T$-module and hence ${\rm rad}(Te_{\alpha})$ is a superfluous maximal submodule of $Te_{\alpha}$. Then $Te_{\alpha}/{\rm rad}(Te_{\alpha})$ is a simple unitary left $T$-module. Also the canonical projection $Te_{\alpha} \rightarrow Te_{\alpha}/{\rm rad}(Te_{\alpha})$ is a projective cover of $Te_{\alpha}/{\rm rad}(Te_{\alpha})$ in $T$Mod. If $Te_{\alpha}/{\rm rad}(Te_{\alpha}) \cong Te_{\beta}/{\rm rad}(Te_{\beta})$ as $T$-modules, then $Te_{\alpha} \cong Te_{\beta}$. It is easy to see that $V_{\alpha} \cong V_{\beta}$ as $\Lambda$-modules and hence $\alpha=\beta$. Therefore $\lbrace Te_{\alpha}/{\rm rad}(Te_{\alpha})~|~\alpha \in I \rbrace$ is a set of non-isomorphic simple unitary left $T$-modules. It remains to prove that  every simple unitary left $T$-module $S$ is isomorphism to $Te_{\alpha}/{\rm rad}(Te_{\alpha})$ for some $\alpha \in I$. Let $S$ be a simple unitary left $T$-module. By Remark \ref{se}, $T$ is a semiperfect ring and hence $S$ has a projective cover in $T$Mod.  Let $f: P \rightarrow S$ be a projective cover of $S$ in $T$Mod. Since ${\rm Ker}f$ is a maximal superfluous submodule of $P$, $P$ is an indecomposable unitary left $T$-module. It follows that $P \cong Te_{\alpha}$ as $T$-modules for some $\alpha \in I$ and hence $S \cong Te_{\alpha}/{\rm rad}(Te_{\alpha})$ as $T$-modules and the result follows.
\end{proof}

Let $\Lambda$ be a ring and $\lbrace V_{\alpha}~|~ \alpha \in J \rbrace$ be a family of finitely generated left $\Lambda$-modules. Set $V=\bigoplus_{\alpha \in J} V_{\alpha}$ and $R=\widehat{{\rm End}}_{\Lambda}(V)$. As in \cite[Page 40]{fu}, we call $R$  {\it the left functor ring of $\Lambda$} when $\lbrace V_{\alpha}~|~ \alpha \in J \rbrace$ is a complete set of representative of the isomorphic classes of finitely generated left $\Lambda$-modules. Two rings  with enough idempotents $R$ and $S$ are said to be {\it Morita equivalent} if there exists an additive covariant equivalence between the category of unitary left $R$-modules and the category of unitary left $S$-modules (see \cite[Sect. 3]{a}).

\begin{Pro}\label{r3}
Let $\Lambda$ be a ring and $\lbrace V_{\alpha}~|~ \alpha \in J \rbrace$ be a complete set of representative of the isomorphic classes of finitely generated indecomposable left $\Lambda$-modules. Set $V=\bigoplus_{\alpha \in J} V_{\alpha}$ and $T=\widehat{{\rm End}}_{\Lambda}(V)$. If every left $\Lambda$-module is a direct sum of finitely generated modules, then $T$ is Morita equivalent to the left functor ring $R$ of $\Lambda$.
\end{Pro}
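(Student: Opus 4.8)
The plan is to reduce the assertion to an equality of additive subcategories of $\Lambda$-Mod and then feed it into Fuller's equivalences and the Morita theory for rings with enough idempotents. Fix a complete set $\lbrace U_{\beta}~|~\beta \in K \rbrace$ of representatives of the isomorphism classes of finitely generated left $\Lambda$-modules, put $U=\bigoplus_{\beta \in K}U_{\beta}$, so that by definition $R=\widehat{{\rm End}}_{\Lambda}(U)$ is the left functor ring of $\Lambda$. Since each $V_{\alpha}$ occurs, up to isomorphism, among the $U_{\beta}$, the module $V$ is a direct summand of $U$; in particular $V \in {\rm Add}(U)$. The heart of the argument is the reverse containment $U \in {\rm Add}(V)$, which together with the previous remark will give ${\rm Add}(V)={\rm Add}(U)$.

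First I would use the hypothesis to establish ${\rm Add}(V)={\rm Add}(U)$. The assumption that every left $\Lambda$-module is a direct sum of finitely generated modules is precisely left pure-semisimplicity, and it forces every finitely generated left $\Lambda$-module to be a finite direct sum of finitely generated indecomposable modules, each of which is isomorphic to one of the $V_{\alpha}$. This is the one point where the hypothesis genuinely enters: it rests on the decomposition theory of pure-semisimple rings, equivalently on $\Lambda$ being left artinian together with Krull--Schmidt for finite-length modules. Consequently each $U_{\beta}$ lies in ${\rm add}(V)$, so $U$ is a summand of a coproduct of copies of $V$, whence $U \in {\rm Add}(V)$ and ${\rm Add}(V)={\rm Add}(U)$.

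Next I would transport this equality through Fuller's equivalence $\widehat{{\rm Hom}}_{\Lambda}(U,-)\colon {\rm Add}(U) \to {\rm Proj}(R)$, which preserves finite direct sums and direct summands and is fully faithful. Set $P=\widehat{{\rm Hom}}_{\Lambda}(U,V)$, a unitary left $R$-module. Since $V \in {\rm Add}(U)$, the module $P$ is projective. Since $U_{\beta} \in {\rm add}(V)$ for every $\beta$, applying the functor gives $Rf_{\beta} \cong \widehat{{\rm Hom}}_{\Lambda}(U,U_{\beta}) \in {\rm add}(P)$, where $f_{\beta}$ is the idempotent attached to $U_{\beta}$; as $R=\bigoplus_{\beta}Rf_{\beta}$, the regular module lies in ${\rm Add}(P)$ and so $P$ is a generator of $R{\rm Mod}$. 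Finally, writing $P=\bigoplus_{\alpha}\widehat{{\rm Hom}}_{\Lambda}(U,V_{\alpha})$ and using full faithfulness of $\widehat{{\rm Hom}}_{\Lambda}(U,-)$ on ${\rm Add}(U)$, the induced matching of the summands $\widehat{{\rm Hom}}_{\Lambda}(U,V_{\alpha})$ and of composition yields a ring isomorphism $\widehat{{\rm End}}_{R}(P) \cong \widehat{{\rm End}}_{\Lambda}(V)=T$ of rings with enough idempotents.

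Thus $P$ is a progenerator in $R{\rm Mod}$ whose enough-idempotents endomorphism ring is $T$, and the Morita theory for rings with enough idempotents then yields that $R$ and $T$ are Morita equivalent, as required. I expect the main obstacle to be the careful bookkeeping in the enough-idempotents setting: verifying that $P$ is a generator in the correct unitary (local-units) sense and that $\widehat{{\rm End}}_{R}(P)$, the finite-support endomorphism ring, is isomorphic to $T$ rather than to the full endomorphism ring. The decomposition input of the second paragraph is the other delicate point, since it is exactly where left pure-semisimplicity is invoked.
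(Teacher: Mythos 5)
Your proposal is correct and shares its core with the paper's argument: both proofs hinge on the observation that the hypothesis forces $\Lambda$ to be left artinian (the paper cites Chase for this), so that every finitely generated module is a finite direct sum of indecomposables and hence ${\rm Add}(V)={\rm Add}(U)$ (the paper in fact notes that both equal all of $\Lambda$-Mod), and both then run this through Fuller's equivalences $\widehat{{\rm Hom}}_{\Lambda}(U,-)$ and $\widehat{{\rm Hom}}_{\Lambda}(V,-)$. Where you diverge is the final step. The paper composes the two equivalences to obtain an explicit additive equivalence ${\rm Proj}(R)\rightarrow{\rm Proj}(T)$, checks that it preserves and reflects finitely generated projective modules, and then invokes a criterion from the authors' companion paper (\cite[Theorem 3.10]{zn}) which upgrades such an equivalence of projective categories to a Morita equivalence of the rings with enough idempotents. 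You instead build a progenerator-style bimodule $P=\widehat{{\rm Hom}}_{\Lambda}(U,V)$, identify $\widehat{{\rm End}}_{R}(P)$ with $T$, and appeal to the Morita theory for rings with local units in Morita-context form. Both routes outsource the last step to a nontrivial citation, and the delicate points you flag (that $P$ is a generator in the unitary sense, that the finite-support endomorphism ring rather than the full one is the right object, and that $P$ is a direct sum of \emph{finitely generated} projectives so that the local-units progenitor theorem applies) are exactly the analogue of the paper's verification that its equivalence preserves and reflects finitely generated modules; neither can be omitted. Your version isolates slightly more cleanly what is actually needed (${\rm Add}(V)={\rm Add}(U)$ rather than equality with all of $\Lambda$-Mod) and is closer to classical Morita theory in flavor, while the paper's version avoids the endomorphism-ring bookkeeping by staying at the level of categories of projectives.
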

\begin{proof}
First we show that ${\rm Proj}(R)$ and ${\rm Proj}(T)$ are equivalent. Let $\lbrace U_{\alpha}~|~ \alpha \in I \rbrace$ be a complete set of representative of the isomorphic classes of finitely generated left $\Lambda$-modules such that $R=\widehat{{\rm End}}_{\Lambda}(U)$, where $U= \bigoplus_{\alpha \in I}U_{\alpha}$.  By assumption  Add$(U)=\Lambda$-Mod and also by  \cite[Theorem 4.4]{chase}, we see that $\Lambda$ is a left artinian ring. Since  Add$(U)=\Lambda$-Mod,  the functor  $\widehat{{\rm Hom}}_{\Lambda}(U,-): \Lambda$-Mod$ \rightarrow {\rm Proj}(R)$ is an additive equivalence with the inverse equivalence $U\otimes_R-$. On the other hand,  every finitely generated left $\Lambda$-module is a direct sum of indecomposable modules because $\Lambda$ is left artinian. Then Add$(V)=\Lambda$-Mod and we have an additive equivalence $\widehat{{\rm Hom}}_{\Lambda}(V,-): \Lambda$-Mod$ \rightarrow {\rm Proj}(T)$ with the inverse equivalence $V\otimes_T-$. Therefore we obtain an additive equivalence $\widehat{{\rm Hom}}_{\Lambda}(V,U\otimes_R-): {\rm Proj}(R) \rightarrow {\rm Proj}(T)$ with the inverse equivalence  $\widehat{{\rm Hom}}_{\Lambda}(U,V\otimes_T-): {\rm Proj}(T) \rightarrow {\rm Proj}(R)$. Now we show that this equivalence preserves finitely generated modules. Let $Y$ be a finitely generated projective unitary left $R$-module. Set $X=U\otimes_RY$. Since $\widehat{{\rm Hom}}_{\Lambda}(U,X) \cong Y$ and $\widehat{{\rm Hom}}_{\Lambda}(U,-)$ preserves direct sums, $X$ is a finitely generated left $\Lambda$-module and hence $X$ is a finite direct sum of finitely generated indecomposable left $\Lambda$-modules. Then $\widehat{{\rm Hom}}_{\Lambda}(V,X)$ is a finitely generated projective unitary left $T$-module. Consequently the equivalence $\widehat{{\rm Hom}}_{\Lambda}(V,U\otimes_R-): {\rm Proj}(R) \rightarrow {\rm Proj}(T)$  preserves finitely generated projective modules. By the similar argument, we can see that it also reflects finitely generated projective modules. Therefore the result follows from \cite[Theorem 3.10]{zn}.
\end{proof}

Let $\Lambda$ be a ring and $\mathcal{U}$ be a non-empty set of left $\Lambda$-modules. A left $\Lambda$-module $M$ is said to be {\it cogenerated by} $\mathcal{U}$ if for every pair of distinct morphisms $f,g : B \rightarrow M$ in $\Lambda$-Mod, there exists a morphism $h : M \rightarrow U$ with $U \in \mathcal{U}$ such that $fh \neq gh$. Also, $\mathcal{U}$ is called a {\it cogenerating set} for $\Lambda$-Mod if every left $\Lambda$-module cogenerated by $\mathcal{U}$. A left $\Lambda$-module $U$ is called {\it cogenerator} if the set $\mathcal{U}=\lbrace U \rbrace$ is a cogenerating set for $\Lambda$-Mod (see \cite[Ch. V, Sect. 7]{s}). A ring $\Lambda$ is called {\it left Morita} if there is an injective cogenerator left $\Lambda$-module $Q$ such that it is an injective cogenerator right $\Delta$-module and $\Lambda \cong {\rm End}_{\Delta}(Q)$, where $\Delta = {\rm End}_{\Lambda}(Q)$ (see \cite[Ch. 9, Sect. 47]{wi}). In this case we say that $\Lambda$ is left Morita to $\Delta$.

\begin{Pro}\label{p6}
Let $\Lambda$ be a ring and $R$ be the left functor ring of $\Lambda$. If $\Lambda$ is a left artinian ring and every finitely generated indecomposable projective unitary right $R$-module has finitely generated essential socle, then $\Lambda$ is a left Morita ring.
\end{Pro}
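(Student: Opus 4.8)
The plan is to make the finitely generated indecomposable projective right $R$-modules completely explicit, to describe their socles as spaces of morphisms annihilated by the radical, and then to read off from the socle hypothesis that every indecomposable injective left $\Lambda$-module is finitely generated; a finitely generated injective cogenerator over a left artinian ring then yields a Morita duality, which is exactly the conclusion.

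First I would fix notation as in Remark~\ref{se}: write $V=\bigoplus_{\alpha\in J}V_\alpha$ with $\{V_\alpha\}_{\alpha\in J}$ a complete set of representatives of the finitely generated left $\Lambda$-modules, $R=\widehat{{\rm End}}_\Lambda(V)$, and $e_\alpha=\pi_\alpha\varepsilon_\alpha$. Exactly as in Remark~\ref{se}, $e_\alpha Re_\alpha\cong{\rm End}_\Lambda(V_\alpha)$ is local when $V_\alpha$ is indecomposable and $R$ is semiperfect, so the finitely generated indecomposable projective unitary right $R$-modules are precisely the $e_\alpha R$ with $V_\alpha$ indecomposable, and $e_\alpha R\cong{\rm Hom}_\Lambda(V_\alpha,V)$ as right $R$-modules (homomorphisms written on the right). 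Since $R/J(R)$ is semisimple, ${\rm soc}(e_\alpha R)=\{g\in{\rm Hom}_\Lambda(V_\alpha,V)\mid gJ(R)=0\}$; because $J(R)$ is the ideal of non-isomorphisms between the indecomposable $V_\beta$'s, a map $g$ lies in the socle iff its composite with every morphism in $J(R)$ out of its target vanishes, and, decomposing the target into indecomposables, the socle is the sum over indecomposable $V_\beta$ of such radical-annihilated maps $V_\alpha\to V_\beta$.

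The key step is to feed the simple modules into this description. Fix a simple left $\Lambda$-module $S$; it is one of the $V_\alpha$, and $e_SR={\rm Hom}_\Lambda(S,V)\ne 0$. By hypothesis its socle is essential, hence nonzero, so there is a nonzero radical-annihilated map $g\colon S\to V_\beta$ with $V_\beta$ indecomposable and finitely generated, and I claim $V_\beta\cong E(S)$. The map $g$ is a monomorphism, and $V_\beta$ must be injective: otherwise I would pick $x\in E(V_\beta)\setminus V_\beta$ and form the finitely generated (hence listed) proper essential extension $V_\beta'=V_\beta+\Lambda x$; the inclusion $\iota\colon V_\beta\hookrightarrow V_\beta'$ is a non-split monomorphism from the indecomposable $V_\beta$, so $\iota\in J(R)$, while $g\iota\ne 0$ since it is a composite of monomorphisms, contradicting $g\in{\rm soc}(e_SR)$. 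Thus $V_\beta$ is an indecomposable injective module; it then has simple essential socle, and $g\ne 0$ forces that socle to be $S$, so $V_\beta\cong E(S)$. In particular $E(S)$ is finitely generated.

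Finally, since $\Lambda$ is left artinian it has only finitely many simple modules $S_1,\dots,S_n$, and every indecomposable injective left $\Lambda$-module is some $E(S_i)$; by the previous step each $E(S_i)$ is finitely generated, so $Q=\bigoplus_{i=1}^n E(S_i)$ is a finitely generated injective cogenerator in $\Lambda$-Mod. Being finitely generated over a left artinian ring, $Q$ has finite length and is therefore linearly compact, so with $\Delta={\rm End}_\Lambda(Q)$ the standard characterization of Morita duality (see \cite[Ch.~9, Sect.~47]{wi}) shows that $Q$ is also an injective cogenerator in ${\rm Mod}$-$\Delta$ and that $\Lambda\cong{\rm End}_\Delta(Q)$; that is, $\Lambda$ is left Morita. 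I expect the crux to be the middle paragraph: converting the purely module-theoretic hypothesis on socles of projective right $R$-modules into finite generation of injective hulls, and in particular the argument that realizes a proper essential finitely generated extension as an element of $J(R)$ in order to force $V_\beta$ to be injective.
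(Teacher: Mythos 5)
Your argument is correct, but it takes a genuinely different route from the paper's. The paper reduces the statement to the Fuller--Hullinger criterion \cite[Theorem 3.3]{f}: it shows that ${\rm soc}(R_R)$ is essential in $R_R$ (using semiperfectness of $R$ and the decomposition of f.g.\ projectives into indecomposables) and that only finitely many isomorphism classes of simple right $R$-modules occur, the latter via the observation that $U\cong e_{\alpha}R$ for the component $U_{\alpha}\cong{_\Lambda\Lambda}$ is faithful, so ${\rm Re}(R_R,e_{\alpha}R)=0$ and every simple submodule of $R_R$ embeds into the finitely generated socle of $e_{\alpha}R$. You instead work directly with the indecomposable projectives $e_SR\cong{\rm Hom}_{\Lambda}(S,V)$ for $S$ simple, identify their socles with the radical-annihilated maps, and use the non-split inclusion into a finitely generated proper essential extension $V_{\beta}+\Lambda x$ to force the target $V_{\beta}$ of a nonzero socle element to be injective, concluding that each $E(S)$ is finitely generated and invoking the standard characterization of Morita duality for left artinian rings (\cite[Proposition 47.15]{wi}, as in Remark \ref{r2}). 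Your approach is arguably more elementary and in fact uses less than the stated hypothesis: you only need ${\rm soc}(e_SR)\neq 0$ for each simple $S$, not finite generation or essentiality of the socles of all indecomposable projectives; what it costs is some care with standard functor-ring facts that you use implicitly and should justify or cite, namely that $J(R)$ coincides with the radical of the category $\Lambda$-mod (so that the non-split monomorphism $\iota$ and the components of a socle element behave as claimed) and that ${\rm soc}(M)$ equals the annihilator of $J(R)$ for unitary modules over the semiperfect ring with enough idempotents $R$; both hold here because $\Lambda$ is left artinian, as in Remark \ref{se}.
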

\begin{proof}
By \cite[Theorem 3.3]{f} it is enough to show that ${\rm soc}(R_R)$ is essential in $R_R$ and also $R_R$ has only finitely many non-isomorphic simple right $R$-modules. Suppose that $\lbrace U_{\alpha}~|~\alpha \in I \rbrace$ is a complete set of representative of the isomorphic classes of finitely generated left $\Lambda$-modules and $R=\widehat{{\rm End}}_{\Lambda}(U)$, where $U=\bigoplus_{\alpha \in I}U_{\alpha}$. $U$ is a generator in $\Lambda$-Mod. Since $\Lambda$ is left artinian, by \cite[Proposition 51.7(9)]{wi}, $R$ is a semiperfect ring. Therefore  every finitely generated projective unitary right $R$-module is a direct sum of indecomposable modules. Hence ${\rm soc}(R_R)$ is an essential submodule of $R_R$. Now we show that $R_R$ has only finitely many non-isomorphic simple right $R$-modules. $_{\Lambda}\Lambda \cong U_{\alpha}$ for some $\alpha \in I$ and so  $U \cong e_{\alpha}R$ as $R$-modules, where $e_{\alpha}=\pi_{\alpha}\varepsilon_{\alpha}$, $\pi_{\alpha}:U \rightarrow U_{\alpha}$ is the canonical projection and $\varepsilon_{\alpha}:U_{\alpha} \rightarrow U$ is the canonical injection. Moreover $e_{\alpha}R = \bigoplus_{i=1}^{t_{\alpha}} P_i$, where each $P_i$ is an indecomposable right $R$-module and hence ${\rm soc}(e_{\alpha}R) \cong \bigoplus_{i=1}^{t_{\alpha}}{\rm soc}(P_i)$ as $R$-modules. Since every finitely generated indecomposable projective unitary right $R$-module has finitely generated socle, ${\rm soc}(e_{\alpha}R)$ is finitely generated. In order to complete the proof, it is sufficient to show that for each simple right $R$-submodule $L$ of $R$, there is an $R$-module monomorphism $L \rightarrow e_{\alpha}R$. Since $e_{\alpha}R$ is faithful,  ${\rm Re}(R_R, e_{\alpha}R)=0$.  It follows that by \cite[Proposition 14.3]{wi}, there exists an $R$-module monomorphism $\varphi: R_R \rightarrow \prod_A e_{\alpha}R$.  Let $L$ be a simple submodule of $R_R$. Then $L=aR$ for some $0 \neq a \in R$. Put $\varphi(a)=(y_i)$, where each $y_i \in e_{\alpha}R$. Since $\varphi$ is a monomorphism,  $y_i \neq 0$ for some $i$. Without loss of generality we can assume that $y_1 \neq 0$. Since $R$ is a ring with enough idempotents, there exists an idempotent $e \in R$ such that $ae=a$ and $y_1e=y_1$. Therefore the $R$-module homomorphism $\pi_1 \circ \overline{\varphi}: L \rightarrow e_{\alpha}R$ is a non-zero morphism, where $\overline{\varphi}:= \varphi|_{L}$ and $\pi_1: \prod_A e_{\alpha}R \rightarrow e_{\alpha}R$ is the canonical projection and the result follows.
\end{proof}

A left $\Lambda$-module $W$ is called {\it minimal cogenerator} if $W \cong \bigoplus_{i\in I} E(S_i)$, where $\lbrace S_i~|~i\in I\rbrace$ is a complete set of representative of the isomorphic classes of simple left $\Lambda$-modules (see \cite[Ch. 3, Sect. 17]{wi}). Let $\Lambda$ and $\Delta$ be two rings. An additive contravariant functor $F : \Lambda$-mod $\rightarrow$ mod-$\Delta$ is called duality if it is an equivalence of categories (see \cite[Ch. 9, Sect. 47]{wi}).

\begin{Rem}\label{r2}{\rm
Let $\Lambda$ be a ring and assume that $W$ is a minimal cogenerator left $\Lambda$-module. Assume that $\Lambda$ is left artinian and $W$ is a finitely generated left $\Lambda$-module. Then we derive from \cite[Proposition 47.15]{wi} that $\Delta = {\rm End}_{\Lambda}(W)$ is a right artinian ring, $W$ is an injective cogenerator in Mod-$\Delta$ and $\Lambda \cong {\rm End}_{\Delta}(W)$ as rings. Therefore $\Lambda$ is a left Morita to $\Delta$. Also we conclude from \cite[Proposition 47.3]{wi} that the functor ${\rm Hom}_{\Lambda}(-,W): \Lambda$-mod$\rightarrow$ mod-$\Delta$ is a duality with the inverse duality ${\rm Hom}_{\Delta}(-,W):$ mod-$\Delta \rightarrow \Lambda$-mod. By \cite[Proposition 24.5]{an} for each finitely generated left $\Lambda$-module $X$, the lattice of all $\Lambda$-submodules of $M$ and the lattice of all $\Delta$-submodules of ${\rm Hom}_{\Lambda}(M,W)$ are anti-isomorphic. Hence for each finitely generated left $\Lambda$-module $X$, soc$({\rm Hom}_{\Lambda}(X,W)) \cong {\rm Hom}_{\Lambda}({\rm top}(X),W)$ as $\Delta$-modules.}
\end{Rem}

\section{Main results}

Let $A$ and $B$ be two $n \times 1$ and $n \times m$ matrices over $\Lambda$, respectively. A formula $\varphi(x)$ is called {\it left positive-primitive formula} over $\Lambda$ in the free variable $x$ if it has the form
$\exists \mathbf{y} ~~\begin{pmatrix}
 A & B
\end{pmatrix}(x, \mathbf{y})^t=0$, where $\mathbf{y}= (c_1, \cdots ,c_m)$ with entries in $\Lambda$ (see \cite{pe}). A left positive-primitive formula is completely determined by the matrices $A$ and $B$.  For a left $\Lambda$-module $M$, the above left positive-primitive formula define a subgroup $$\varphi(M)=\lbrace a \in M~|~\exists c_1,\cdots,c_m ~{\rm in}~M ~{\rm with}~\begin{pmatrix}
 A & B
\end{pmatrix}{\begin{pmatrix}
a & c_1 & \cdots &c_m
\end{pmatrix}}^t=0\rbrace.$$
If $f: M \rightarrow N$ is a $\Lambda$-module homomorphism, then $(a)f \in \varphi(N)$ when $a \in \varphi(M)$. This yields a covariant functor $\varphi(-): \Lambda$-{\rm mod}$ \rightarrow {\rm Ab}$ which is a subfunctor of the forgetful functor ${\rm Hom}_{\Lambda}({_\Lambda\Lambda},-)$. Note that a subfunctors $F$ of the forgetful functor ${\rm Hom}_{\Lambda}({_\Lambda\Lambda},-)$ is finitely generated if and only if $F \simeq \varphi(-)$ for some positive-primitive formula $\varphi(x)$ over $\Lambda$ (see \cite[Corollary 12.4]{pe}).

\begin{Rem}\label{r22}
{\rm Let $\Lambda$ be a ring and suppose that $R$ is the left functor ring of $\Lambda$. Then $R=\widehat{{\rm End}}_{\Lambda}(U)$, where $U= \bigoplus_{\alpha \in I}U_{\alpha}$ and $\lbrace U_{\alpha}~|~ \alpha \in I \rbrace$ is a complete set of representative of the isomorphic classes of finitely generated left $\Lambda$-modules. We define as in \cite{wi} a functor
\begin{center}
$\textbf{F}: $ {\rm Mod}$R \longrightarrow {\rm Mod}((\Lambda$-mod$)^{\rm op})$
\end{center}
as follows. For each $M \in$ Mod$R$, we set $\textbf{F}(M) = {\rm Hom}_R({\rm Hom}_{\Lambda}(-,U),M)$. It is easy to check that ${\rm Hom}_R({\rm Hom}_{\Lambda}(-,U),M):\Lambda$-mod$ \rightarrow {\rm Ab}$ is an additive covariant functor. If $f: M \rightarrow N$ is a morphism in Mod$R$, then $\textbf{F}(f) = {\rm Hom}_R({\rm Hom}_{\Lambda}(-,U),f)$ which is a natural transformation from ${\rm Hom}_R({\rm Hom}_{\Lambda}(-,U),M)$ to ${\rm Hom}_R({\rm Hom}_{\Lambda}(-,U),N)$. By \cite[Proposition 52.5]{wi}, $\textbf{F}$ is an equivalence of categories.  It is not difficult to see that the functor $\textbf{F}$ preserves and reflects simple objects, projective objects, monomorphisms, epimorphisms, essential monomorphisms and direct sums. Let $F$ be an object in Mod$((\Lambda$-mod$)^{\rm op})$. There exists a subfunctor $G$ of $F$ such that the following diagram is commutative
\begin{displaymath}
\xymatrix{
\mathbf{F}({\rm soc}(X)) \ar[dr]_{h} \ar@{->}[r]^{\mathbf{F}(\ell)} & \mathbf{F}(X) \ar[r]^{\varphi} & F  \\
&  G \ar@{^{(}->}[ur]\\
 }
\end{displaymath}
, where $\varphi$ and $h$ are isomorphisms and $\ell: {\rm soc}(X) \rightarrow X$ is the canonical injection in Mod$R$. Note that the functor $G$ is independent of choice $X$. The subfunctor $G$ of $F$ is called {\rm socle of $F$} and we denote it by ${\rm soc}(F)$.  The indecomposable projective objects with finitely generated essential multiplicity-free socle in Mod$((\Lambda$-mod$)^{\rm op})$ correspond to the indecomposable projective unitary right $R$-modules with finitely generated essential multiplicity-free socle under the equivalence $\textbf{F}$. From now on, we fix the functor $\textbf{F}$.}
\end{Rem}

A semiperfect ring $\Lambda$ is called basic if $\Lambda/J(\Lambda)$ is a direct sum of division rings \cite{an}. By \cite[Proposition 27.14]{an},  any semiperfect ring is Morita equivalent to a basic ring. A left $\Lambda$-module $M$ has {\it multiplicity-free top} if composition factors of top$(M)$ are pairwise non-isomorphic (see \cite[Sect. 1]{r}). Let $R$ be a ring with enough idempotents. We recall that $R$ is called {\it left locally noetherian {\rm (}resp., left locally artinian{\rm )}} if every finitely generated unitary left $R$-module is noetherian (resp., artinian) (see \cite[Ch. 5, Sect. 27]{wi}). Also $R$ is called {\it left locally finite} if it is both left locally artinian and left locally noetherian (see \cite[Ch. 6, Sect. 32]{wi}).  \\

Note that $R$ being left (resp., right) artinian generalized left (resp., right) QF-2 is a Morita invariant property. We are now in a position to prove our main result.

\begin{The}\label{r33}
Let $\Lambda$ be a basic ring. Then the following conditions are equivalent.
\begin{itemize}
\item[$(a)$] $\Lambda$ is a left K\"othe ring.
\item[$(b)$] $\Lambda$ is a ring of finite representation type and the Auslander ring $T$ of $\Lambda$ is a generalized right QF-2 ring.
\item[$(c)$] $\Lambda$ is a left artinian ring and $T=\widehat{{\rm End}}_{\Lambda}(V)$ is a left locally finite generalized right QF-2 ring, where $V$ is a direct sum of modules in a complete set of representative of the isomorphic classes of  finitely generated indecomposable left $\Lambda$-modules.
\item[$(d)$] $\Lambda$ is a left artinian ring and for each indecomposable left $\Lambda$-module $M$, ${\rm Hom}_{\Lambda}(M,-) \simeq \varphi(-)$ for some positive-primitive formula $\varphi(x)$ over $\Lambda$.
\item[$(e)$] $\Lambda$ is a left artinian ring and every indecomposable projective object in  ${\rm Mod}((\Lambda$-mod$)^{\rm op})$ has finitely generated essential multiplicity-free socle.
\end{itemize}
\end{The}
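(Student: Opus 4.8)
The plan is to show that all five statements are different incarnations of the single condition that $\Lambda$ is of finite representation type and every finitely generated indecomposable left $\Lambda$-module has multiplicity-free top. For a basic ring this is the same as being cyclic: a multiplicity-free semisimple module is a cyclic $\Lambda/J(\Lambda)$-module, and by Nakayama any lift of a generator of $\text{top}(M)$ generates $M$. Granting the standard fact that over an artinian ring of finite representation type every module is a direct sum of finitely generated indecomposables, this disposes of $(a)$: if $\Lambda$ is left K\"othe then each indecomposable, being a summand of a direct sum of cyclics, is itself cyclic, and K\"othe rings have finite representation type; conversely finite representation type together with ``all indecomposables cyclic'' makes every module a direct sum of cyclics. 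So the real task is to recognise ``finite representation type $+$ multiplicity-free top'' inside $(b)$--$(e)$.

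The technical heart is the right-handed companion of Proposition \ref{r1}: for $\Lambda$ of finite representation type with Auslander ring $T$, the ring $T$ is generalized right QF-2 precisely when every indecomposable left $\Lambda$-module has multiplicity-free top. I would prove this by duality. First I secure that $\Lambda$ is left Morita: under $(a)$ the minimal cogenerator $\bigoplus_i E(S_i)$ is a finite sum of injective hulls of simples, each of which is indecomposable and a direct sum of cyclics, hence cyclic and finitely generated, so Remark \ref{r2} applies; under the hypotheses attached to $(e)$ this comes instead from Proposition \ref{p6}. Remark \ref{r2} then supplies a duality $D=\text{Hom}_\Lambda(-,W)\colon \Lambda\text{-mod}\to\text{mod-}\Delta$ with $\text{soc}(DX)\cong D(\text{top}(X))$, turning ``multiplicity-free top of $X_i$'' into ``multiplicity-free socle of the indecomposable right $\Delta$-module $DX_i$''. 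Since $D$ is an anti-equivalence, $\Delta^{\rm op}$ is left artinian of finite representation type with Auslander ring $T^{\rm op}$; Proposition \ref{r1} applied to $\Delta^{\rm op}$ then reads ``generalized left QF-2 of $T^{\rm op}$'', i.e. generalized right QF-2 of $T$. Call this chain $(\star)$; combined with the first paragraph it yields $(a)\Leftrightarrow(b)$.

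For the remaining conditions I would pass to the functor category by the equivalence $\mathbf{F}$ of Remark \ref{r22} and the Morita equivalence $T\sim R$ of Proposition \ref{r3} (whose hypothesis, that every module is a direct sum of finitely generated modules, is guaranteed by finite representation type). The indecomposable projective objects of $\text{Mod}((\Lambda\text{-mod})^{\rm op})$ are the representables $\text{Hom}_\Lambda(X_i,-)$; under $\mathbf{F}$ and Morita they match the indecomposable projective right $T$-modules $e_iT$, and the socle of a functor as defined in Remark \ref{r22} matches the socle of the corresponding module. Hence $(e)$ says exactly that $T$ is generalized right QF-2 with finitely generated essential socles, while the locally-finite clause of $(c)$ is, through $\mathbf{F}$, the local finiteness of the functor category, equivalent by Auslander's theorem to finite representation type; this gives $(b)\Leftrightarrow(c)\Leftrightarrow(e)$. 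Condition $(d)$ is Yoneda: $\text{Hom}_\Lambda(M,-)$ is a positive-primitive functor iff it is a finitely generated subfunctor of the forgetful functor $\text{Hom}_\Lambda({_\Lambda\Lambda},-)$ by \cite[Corollary 12.4]{pe}, which by the covariant Yoneda lemma forces an epimorphism ${_\Lambda\Lambda}\to M$ with finitely generated kernel, i.e. $M$ cyclic; so $(d)$ is again ``left artinian $+$ every indecomposable cyclic''.

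I expect the main obstacle to be $(\star)$ together with the finiteness issues around it. In $(\star)$ the delicate bookkeeping is to confirm that the Auslander ring of $\Delta^{\rm op}$ is exactly $T^{\rm op}$ (through $\text{End}_\Lambda(X)^{\rm op}\cong\text{End}_\Delta(DX)$), so that left and right QF-2 interchange correctly, and that $D$ preserves composition-factor multiplicities so ``multiplicity-free'' really transfers. Equally delicate is making finite representation type available at every appeal to $(\star)$, and in particular upgrading the weak hypotheses of $(c)$ and $(d)$: for $(c)$ this is Auslander's theorem applied through $\mathbf{F}$, but for $(d)$ the mere cyclicity of every indecomposable must be shown to force finite representation type — this does \emph{not} follow from bounded length alone, and the intended route is to observe that every representable then embeds in the forgetful functor as a finitely generated subfunctor, feed this into Auslander's local-finiteness criterion, and import the K\"othe/Kawada finiteness of \cite{za}. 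Organising the cycle so that $\Lambda$ is known to be left Morita exactly where $(\star)$ is used is the final structural point.
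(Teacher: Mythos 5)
Your overall strategy is the paper's own: reduce every condition to ``finite representation type plus multiplicity-free tops'' (for a basic ring, equivalently, all indecomposables cyclic, via \cite{za}), transfer tops to socles through the duality ${\rm Hom}_{\Lambda}(-,W)$ with the minimal cogenerator and apply Proposition \ref{r1} on the dual side, translate $(c)$ and $(e)$ through the functor ring and the equivalence $\mathbf{F}$ of Remark \ref{r22}, and handle $(d)$ by Yoneda together with \cite[Corollary 12.4]{pe}. Your step $(\star)$ is exactly the content of the paper's $(a)\Rightarrow(b)$ and $(e)\Rightarrow(a)$ arguments, including the $\End_{\Lambda}(X)\cong\End_{\Delta}(DX)$ bookkeeping and the appeal to Proposition \ref{p6} to make $\Lambda$ left Morita under $(e)$.

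The one genuine gap is in your treatment of $(c)$. You propose to extract finite representation type from the left local finiteness of $T=\widehat{{\rm End}}_{\Lambda}(V)$ by passing to ${\rm Mod}((\Lambda\text{-mod})^{\rm op})$ via $\mathbf{F}$ and the Morita equivalence $T\sim R$ of Proposition \ref{r3}, and then invoking Auslander's local-finiteness criterion. But Proposition \ref{r3} is proved under the hypothesis that every left $\Lambda$-module is a direct sum of finitely generated modules, which at that point is precisely what you do not yet have; your parenthetical ``guaranteed by finite representation type'' begs the question. (An unconditional equivalence coming from ${\rm add}(V)={\rm add}(U)$ is plausible, but it is not what Proposition \ref{r3} supplies and you would have to prove it separately.) The paper avoids the detour entirely: it works inside $T{\rm Mod}$, uses Lemma \ref{l1} to see that $\lbrace Te_{\alpha}/{\rm rad}(Te_{\alpha})\rbrace_{\alpha}$ is a complete irredundant list of simple unitary left $T$-modules, and shows each of these occurs as a composition factor of one of the finitely many modules $\widehat{{\rm Hom}}_{\Lambda}(V,S_i)$, which have finite length by local finiteness; hence the index set is finite. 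A smaller point on $(d)\Rightarrow(a)$: the paper does conclude finite representation type from the resulting bound on lengths of indecomposables, citing \cite[Proposition 54.3]{wi}; the substitute you sketch (feeding the embeddings of representables into the forgetful functor into Auslander's criterion) is not obviously easier, since Auslander's criterion asks that every representable be noetherian and artinian, and embedding them all into ${\rm Hom}_{\Lambda}({_\Lambda\Lambda},-)$ only helps if that functor itself has finite length, which is not automatic.
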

\begin{proof}
$(a) \Rightarrow (b)$. Let $\Lambda$ be a left K\"othe ring. By \cite[Corollary 3.2]{za}, $\Lambda$ is of finite representation type. Let $\lbrace X_1, \cdots, X_m \rbrace$ be a complete set of representative of the isomorphic classes of finitely generated indecomposable left $\Lambda$-module. $T={\rm End}_{\Lambda}(X)$ is an Auslander ring of $\Lambda$, where $X=\bigoplus_{i=1}^mX_i$. We show that $T$ is a generalized right QF-2 ring.    Let $W$ be a minimal cogenerator left $\Lambda$-module and $\Delta={\rm End}_{\Lambda}(W)$. Since $\Lambda$ is of finite representation type, $W$ is a finitely generated left $\Lambda$-module. According to Remark \ref{r2}, ${\rm Hom}_{\Lambda}(-,W): \Lambda$-mod$\rightarrow$ mod-$\Delta$ is a duality and $\Delta$ is a right artinian ring. It is easily seen that $\lbrace {\rm Hom}_{\Lambda}(X_1,W), \cdots, {\rm Hom}_{\Lambda}(X_m,W) \rbrace$ is a complete set of representative of the isomorphic classes of finitely generated indecomposable right $\Delta$-modules. Then $\Delta$ is a ring of finite representation type and $\Theta={\rm End}_{\Delta}(\bigoplus_{i=1}^m{\rm Hom}_{\Lambda}(X_i,W))$ is the Auslander ring of $\Delta$. We conclude that the Auslander ring $T$ of $\Lambda$ is isomorphic to the Auslander ring $\Theta$ of $\Delta$ because ${\rm Hom}_{\Lambda}(X,W) \cong \bigoplus_{i=1}^m{\rm Hom}_{\Lambda}(X_i,W)$ as right $\Delta$-modules. Since the property "artinian generalized right QF-2" is Morita invariant, it is enough to show that $\Theta$ is a generalized right QF-2 ring. According to Proposition \ref{r1} it is enough to show that each ${\rm Hom}_{\Lambda}(X_i,W)$ has multiplicity-free socle. Let $1 \leq l \leq m$. top$(X_l)=\bigoplus_{i=1}^t S_i$, where $t \in {\Bbb{N}}$ and each $S_i$ is a simple left $\Lambda$-module. By \cite[Corollary 3.3]{za}, $X_l$ has multiplicity-free top and so $S_i \ncong S_j$ when $i \neq j$. It follows that ${\rm Hom}_{\Lambda}(S_i, W) \ncong {\rm Hom}_{\Lambda}(S_j, W)$ when $i \neq j$. On the other hand, by Remark \ref{r2}, soc$({\rm Hom}_{\Lambda}(X_l,W)) \cong {\rm Hom}_{\Lambda}({\rm top}(X_l),W)$ as $\Delta$-modules and hence  soc$({\rm Hom}_{\Lambda}(X_l,W)) \cong \bigoplus_{i=1}^t {\rm Hom}_{\Lambda}(S_i, W)$ as $\Delta$-modules. Therefore ${\rm Hom}_{\Lambda}(X_l,W)$ has multiplicity-free socle and the result follows.\\
$(b) \Rightarrow (c)$ is clear.\\
$(c) \Rightarrow (d)$. First we show that $\Lambda$ is a ring of finite representation type. Since $\Lambda$ is left artinian, there are only finitely many non-isomorphic simple left $\Lambda$-modules. Let $\lbrace S_1, \cdots, S_n \rbrace$ be a complete set of representative of the isomorphic classes of simple left $\Lambda$-modules. Let $\lbrace V_{\alpha}~|~ \alpha \in I\rbrace$ be a complete set of representative of the isomorphic classes of  finitely generated indecomposable left $\Lambda$-modules and $T=\widehat{{\rm End}}_{\Lambda}(V)$, where $V=\bigoplus_{\alpha \in I}V_{\alpha}$. Let $\alpha \in I$. Since $V_{\alpha}$ is a finitely generated left $\Lambda$-module, there exists a non-zero $\Lambda$-module epimorphism $h: V_{\alpha} \rightarrow S_i$ for some $1 \leq i \leq n$. We know that $V$ is a generator in $\Lambda$-Mod. Hence by \cite[Proposition 51.5(1)]{wi}, $\widehat{{\rm Hom}}_{\Lambda}(V,h): \widehat{{\rm Hom}}_{\Lambda}(V,V_{\alpha}) \rightarrow \widehat{{\rm Hom}}_{\Lambda}(V,S_i)$ is non-zero. Consequently there is a non-zero $T$-module epimorphism $\varphi:  Te_{\alpha} \rightarrow M$ where $M$ is a $T$-submodule of $\widehat{{\rm Hom}}_{\Lambda}(V,S_i)$. Since by the proof of Lemma \ref{l1}, ${\rm rad}(Te_{\alpha})$ is a maximal superfluous submodule of $Te_{\alpha}$,  we can see that $\varphi({\rm rad}(Te_{\alpha}))={\rm rad}(M)$.  On the other hand, since $T$ is a left locally noetherian ring, $\widehat{{\rm Hom}}_{\Lambda}(V,S_i)$ is a noetherian left $T$-module. It follows that $M$ is a finitely generated left $T$-module and so it has a maximal submodule. This means that ${\rm rad}(M)$ is a proper submodule of $M$. Therefore the $T$-module epimorphism  $\overline{\varphi}: Te_{\alpha}/{\rm rad}(Te_{\alpha}) \rightarrow M/{\rm rad}(M)$ defined by $x + {\rm rad}(Te_{\alpha}) \mapsto (x)\varphi + {\rm rad}(M)$ is  non-zero. Since ${\rm rad}(Te_{\alpha})$ is a maximal submodule of $Te_{\alpha}$,  $Te_{\alpha}/{\rm rad}(Te_{\alpha}) \cong M/{\rm rad}(M)$ as $T$-modules. Since $T$ is a left locally finite ring, $\widehat{{\rm Hom}}_{\Lambda}(V,S_i)$ has finite length. This implies that $M/{\rm rad}(M)$ is a composition factor of $\widehat{{\rm Hom}}_{\Lambda}(V,S_i)$. By Lemma \ref{l1}, $\lbrace Te_{\alpha}/{\rm rad}(Te_{\alpha})~|~\alpha \in I \rbrace$ is a complete set of representative of the isomorphic classes of simple unitary left $T$-modules. Therefore this set is finite. Consequently $\Lambda$ is a ring of finite representation type. Next we show that every finitely generated indecomposable left $\Lambda$-module has multiplicity-free top. Let $W$ be the minimal cogenerator in $\Lambda$-Mod. $W$ is finitely generated and by Remark \ref{r2}, the functor ${\rm Hom}_{\Lambda}(-,W): \Lambda$-mod$\rightarrow$ mod-$\Delta$ is a duality with the inverse duality ${\rm Hom}_{\Delta}(-,W):$ mod-$\Delta \rightarrow \Lambda$-mod, where $\Delta={\rm End}_{\Lambda}(W)$.  Moreover for each finitely generated left $\Lambda$-module $X$, we have soc$({\rm Hom}_{\Lambda}(X,W)) \cong {\rm Hom}_{\Lambda}({\rm top}(X),W)$ as $\Delta$-modules. Let $X$ be a finitely generated indecomposable left $\Lambda$-module. Then ${\rm top}(X)=\bigoplus_{j=1}^tS'_j$, where each $S'_j$ is a simple left $\Lambda$-module. It follows that soc$({\rm Hom}_{\Lambda}(X,W)) \cong \bigoplus_{j=1}^t{\rm Hom}_{\Lambda}(S'_j,W)$ as $\Delta$-modules. We observe that each ${\rm Hom}_{\Lambda}(S'_j,W)$ is a simple right $\Delta$-module. Since $T$ is generalized right QF-2, ${\rm Hom}_{\Lambda}(S'_j,W) \ncong {\rm Hom}_{\Lambda}(S'_k,W)$ when $j \neq k$. We conclude that $X$ has multiplicity-free top. Then by \cite[Corollary 3.2]{za}, every indecomposable left $\Lambda$-module is cyclic. Let $M$ be an indecomposable left $\Lambda$-module. Then there exists a $\Lambda$-module epimorphism $f: {_\Lambda\Lambda} \rightarrow  M$. It follows that there is a monomorphism ${\rm Hom}_{\Lambda}(M,-) \rightarrow {\rm Hom}_{\Lambda}({_\Lambda\Lambda},-)$ in Mod$((\Lambda$-mod$)^{\rm op})$. Therefore ${\rm Hom}_{\Lambda}(M,-) \simeq \varphi(-)$ for some positive-primitive formula $\varphi(x)$ over $\Lambda$. \\
$(d) \Rightarrow (a)$. Let $M$ be a finitely generated indecomposable left $\Lambda$-module and assume that there exists a monomorphism $\eta: {\rm Hom}_{\Lambda}(M,-) \rightarrow {\rm Hom}_{\Lambda}({_{\Lambda}\Lambda},-)$ in  Mod$((\Lambda$-mod$)^{\rm op})$. Yoneda's Lemma implies that $\eta={\rm Hom}_{\Lambda}(f,-)$ for some $\Lambda$-module homomorphism $f: {_{\Lambda}\Lambda} \rightarrow M$. It is easy to see that $f$ is an epimorphism. This yields that there is a finite upper bound for the lengths of finitely generated indecomposable left $\Lambda$-modules and hence by \cite[Proposition 54.3]{wi}, $\Lambda$ is of finite representation type. Therefore by \cite[Corollary 4.8]{ausla2}, $\Lambda$ is a left K\"othe ring and the result follows.\\
$(c) \Rightarrow (e)$. By using of the proof of $(c) \Rightarrow (d)$, we can see that $\Lambda$ is of finite representation type and hence $T$ is the Auslander ring of $\Lambda$. Also by \cite[Corollary 4.8]{ausla2}, every left $\Lambda$-module is a direct sum of finitely generated indecomposable left $\Lambda$-modules. By Proposition \ref{r3}, there exists an additive covariant equivalence $H:$ Mod-$T \rightarrow$ Mod$R$. It is easy to see that $H$ preserves essential monomorphisms and simple modules. By Remark \ref{r22}, the functor $\mathbf{F} \circ H :$ Mod-$T \rightarrow {\rm Mod}((\Lambda$-mod$)^{\rm op})$ is an additive equivalence of categories. Let $G$ be an indecomposable projective object in Mod$((\Lambda$-mod$)^{\rm op})$. By Remark \ref{r22}, there is an indecomposable projective unitary right $R$-module $X$ such that $\mathbf{F}(X) \cong G$.  Since by the proof of \cite[Theorem 3.10]{zn}, the functor $H$ reflects indecomposable projective objects, $H(Q) \cong X$ for some indecomposable projective right $T$-module $Q$. Since $T$ is an artinian ring, $Q$ is finitely generated and by \cite[Propositions 21.3 and 31.4]{wi}, soc$(Q)$ is finitely generated and essential in $Q$. Then soc$(Q)=\bigoplus_{i=1}^nS_i$, where $n \in {\Bbb{N}}$ and each $S_i$ is a simple right $T$-module. Since $T$ is a generalized right QF-2 ring, $S_i \ncong S_j$ when $i \neq j$. On the other hand  there is an essential monomorphism $h: H({\rm soc}(Q)) \rightarrow H(Q)$. We know that $H({\rm soc}(Q)) \cong \bigoplus_{i=1}^nH(S_i)$ as $R$-modules. Therefore ${\rm soc}(H(Q))=\bigoplus_{i=1}^nT_i$ is an essential submodule of $H(Q)$, where each $T_i \cong H(S_i)$ as $R$-modules. Moreover $T_i \ncong T_j$ when $i \neq j$. Now we have an essential monomorphism $\eta: \mathbf{F}({\rm soc}(H(Q))) \rightarrow \mathbf{F}(H(Q))$ in Mod$((\Lambda$-mod$)^{\rm op})$. Since $\mathbf{F}(H(Q)) \cong G$, there is an essential monomorphism $\tau: \mathbf{F}({\rm soc}(H(Q))) \rightarrow G$ in Mod$((\Lambda$-mod$)^{\rm op})$. So ${\rm Im}(\tau)$ is an essential subfunctor of $G$ which is isomorphism to $\mathbf{F}({\rm soc}(H(Q)))$. It is easy to see that $\mathbf{F}({\rm soc}(H(Q))) \cong \bigoplus_{i=1}^n \mathbf{F}(T_i)$ and also $\mathbf{F}(T_i) \ncong \mathbf{F}(T_j)$ when $i \neq j$. Moreover each $\mathbf{F}(T_i)$ is a simple object in  Mod$((\Lambda$-mod$)^{\rm op})$. Therefore $G$ has finitely generated essential multiplicity-free socle and the result follows.\\
$(e) \Rightarrow (a)$. Let $R$ be the left functor ring of $\Lambda$. Then every indecomposable projective unitary right $R$-module has finitely generated essential multiplicity-free socle. By Proposition \ref{p6}, $\Lambda$ is a left Morita ring. Choose $W$ the minimal cogenerator left $\Lambda$-module. By \cite[Proposition 47.15(2)]{wi}, $W$ is a finitely generated left $\Lambda$-module and $\Delta={\rm End}_{\Lambda}(W)$ is a right artinian ring. We show that every finitely generated indecomposable right $\Delta$-module has multiplicity-free socle. Let $N$ be a finitely generated indecomposable right $\Delta$-module. Since $\Delta$ is right artinian, by \cite[Propositions 21.3 and 31.4]{wi}, soc$(N)$ is finitely generated and essential in $N$. It follows that soc$(N)=\bigoplus_{i=1}^sT_i$, where $s \in {\Bbb{N}}$ and each $T_i$ is a simple right $\Delta$-module. Also $E(N) \cong \bigoplus_{i=1}^sE(T_i)$ as $\Delta$-modules. We show that $T_i \ncong T_j$ when $i \neq j$. Let $\lbrace U_{\alpha}~|~\alpha \in I \rbrace$ be a complete set of representative of the isomorphic classes of  finitely generated left $\Lambda$-modules such that $R=\widehat{{\rm End}}_{\Lambda}(U)$, where $U=\bigoplus_{\alpha \in I}U_{\alpha}$. By Remark \ref{r2}, the functor ${\rm Hom}_{\Lambda}(-,W): \Lambda$-mod $\rightarrow$ mod-$\Delta$ is a duality and hence $\lbrace {\rm Hom}_{\Lambda}(U_{\alpha},W)~|~ \alpha \in I \rbrace$ is a complete set of representative of the isomorphic classes of finitely generated right $\Delta$-modules. Then $T=\widehat{{\rm End}}_{\Delta}(V)$ is a right functor ring of $\Delta$, where $V=\bigoplus_{\alpha \in I}{\rm Hom}_{\Lambda}(U_{\alpha},W)$. Since $V$ is a generator in Mod-$\Delta$ we can see that $\widehat{{\rm Hom}}_{\Delta}(V,E(N))$ is injective in Mod$T$.  Also there is an essential monomorphism $\widehat{{\rm Hom}}_{\Delta}(V,N) \rightarrow \widehat{{\rm Hom}}_{\Delta}(V,E(N))$ in Mod$T$. Thus $E(\widehat{{\rm Hom}}_{\Delta}(V,N)) \cong \bigoplus_{i=1}^s \widehat{{\rm Hom}}_{\Delta}(V,E(T_i))$ as right $T$-modules. Since by Remark \ref{r2}, $W$ is an injective cogenerator right $\Delta$-module, each $E(T_i)$ is isomorphic to a direct summand of $W$. It follows that each $E(T_i)$ is finitely generated right $\Delta$-module. Since $\widehat{{\rm Hom}}_{\Delta}(V,-): {\rm Add}(V) \rightarrow {\rm Proj}(T^{\rm op})$ is an equivalence of categories, $\widehat{{\rm Hom}}_{\Delta}(V,N)$ and each $\widehat{{\rm Hom}}_{\Delta}(V,E(T_i))$ are projective unitary right $T$-modules. Moreover ${\rm End}_T(\widehat{{\rm Hom}}_{\Delta}(V,E(T_i))) \cong {\rm End}_{\Delta}(E(T_i))$ as rings. By \cite[Proposition 19.9]{wi},  ${\rm End}_T(\widehat{{\rm Hom}}_{\Delta}(V,E(T_i)))$ is a local ring.  On the other hand, by Remark \ref{r2}, $\Lambda$ is left Morita to $\Delta$. So we deduce from \cite[Lemma 3.1]{f} that every  indecomposable projective unitary right $T$-module has finitely generated essential multiplicity-free socle. This yields that ${\rm soc}(\widehat{{\rm Hom}}_{\Delta}(V,N))=\bigoplus_{i=1}^m{T'}_i$ with ${T'}_i \ncong {T'}_j$ when $i \neq j$, where $m \in {\Bbb{N}}$ and each ${T'}_i$ is a simple unitary right $T$-module. In addition $E(\widehat{{\rm Hom}}_{\Delta}(V,N)) \cong \bigoplus_{i=1}^m E({T'}_i)$ as $T$-modules. Therefore $\bigoplus_{i=1}^s \widehat{{\rm Hom}}_{\Delta}(V,E(T_i)) \cong \bigoplus_{i=1}^mE({T'}_i)$. Since ${\rm End}_T(E({T'}_i))$ is a local ring,  $m=s$ and for each $i$, $E({T'}_i) \cong \widehat{{\rm Hom}}_{\Delta}(V,E(T_i))$ as $T$-modules.  If $T_i \cong T_j$, then $E(T_i) \cong E(T_j)$. Thus $\widehat{{\rm Hom}}_{\Delta}(V,E(T_i)) \cong \widehat{{\rm Hom}}_{\Delta}(V,E(T_j))$. It follows that $E({T'}_i) \cong E({T'}_j)$ and hence $i=j$. Consequently every finitely generated indecomposable right $\Delta$-module has multiplicity-free socle. Let $M$ be a finitely generated indecomposable left $\Lambda$-module. Since $\Lambda$ is left artinian, top$(M)=\bigoplus_{i=1}^t S_i$, where $t \in {\Bbb{N}}$ and each $S_i$ is a simple left $\Lambda$-module. By Remark \ref{r2}, ${\rm soc}({\rm Hom}_{\Lambda}(M,W)) \cong \bigoplus_{i=1}^t{\rm Hom}_{\Lambda}(S_i,W)$ as $\Delta$-modules and ${\rm Hom}_{\Lambda}(M,W)$ is a finitely generated indecomposable right $\Delta$-module. Hence  ${\rm Hom}_{\Lambda}(S_i,W) \ncong {\rm Hom}_{\Lambda}(S_j,W)$ when $i \neq j$. This implies that $S_i \ncong S_j$ when $i \neq j$. Therefore every finitely generated indecomposable left $\Lambda$-module has multiplicity-free top. Consequently by  \cite[Corollary 3.2]{za}, $\Lambda$ is a left K\"othe ring and the result follows.
\end{proof}

As a consequence, we obtain the Auslander correspondence for left Kawada rings.

\begin{The}\label{ka}
There exists a bijection between the Morita equivalence classes of left Kawada rings and the Morita equivalence classes of Auslander generalized right QF-2 rings.
\end{The}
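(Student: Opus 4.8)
The plan is to derive the statement by restricting the classical Auslander correspondence. Recall that this correspondence is the bijection $[\Lambda]\mapsto[T]$ with $T=\End_{\Lambda}(X)$, between the Morita equivalence classes of rings of finite representation type and those of Auslander rings, $X$ being an Auslander generator of $\Lambda$-mod (\cite[Corollary 4.7]{ausla2}). The class of left Kawada rings is closed under Morita equivalence (two Morita equivalent rings have the same Morita class, so one is left Kawada exactly when the other is) and consists of representation-finite rings, since a left Kawada ring is in particular left K\"othe and hence of finite representation type by \cite[Corollary 3.2]{za}. On the other side, every Auslander generalized right QF-2 ring is in particular an Auslander ring, so it is of the form $\End_{\Lambda}(X)$ for a unique Morita class of representation-finite rings. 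Thus it suffices to prove, for a representation-finite ring $\Lambda$ with Auslander ring $T$, that $\Lambda$ is left Kawada if and only if $T$ is generalized right QF-2; the bijection is then obtained by restricting $[\Lambda]\mapsto[T]$.

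For the reduction I would work with the basic ring $\Lambda_{0}$ Morita equivalent to $\Lambda$ and its Auslander ring $T_{0}=\End_{\Lambda_{0}}(X_{0})$, noting that $T_{0}$ is Morita equivalent to $T$ and that ``artinian generalized right QF-2'' is a Morita-invariant property, so that $T$ is generalized right QF-2 if and only if $T_{0}$ is. Now Theorem \ref{r33}$(a)\Leftrightarrow(b)$, applied to the basic ring $\Lambda_{0}$, says precisely that $\Lambda_{0}$ is left K\"othe if and only if $T_{0}$ is generalized right QF-2. Hence the claim reduces to the equivalence: \emph{$\Lambda$ is left Kawada if and only if the basic ring $\Lambda_{0}$ in its Morita class is left K\"othe.} The implication ``$\Lambda$ left Kawada $\Rightarrow\Lambda_{0}$ left K\"othe'' is immediate, since $\Lambda_{0}$ is Morita equivalent to $\Lambda$ and every such ring is left K\"othe by the definition of left Kawada.

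The heart of the proof is the converse: I must show that if the basic representative $\Lambda_{0}$ is left K\"othe, then \emph{every} ring $\Lambda'$ Morita equivalent to $\Lambda_{0}$ is left K\"othe. The key observation is that the property ``every finitely generated indecomposable left module has multiplicity-free top'' is Morita invariant: a Morita equivalence is exact, carries radicals to radicals and hence tops to tops, and induces a multiplicity-preserving bijection between the simple modules of the two rings. Over the basic ring $\Lambda_{0}$ this property is equivalent to being left K\"othe: if $\Lambda_{0}$ is left K\"othe then each indecomposable module decomposes as a direct sum of cyclic submodules, forcing it, by indecomposability, to be cyclic, while over a basic ring a finitely generated module is cyclic exactly when its top is multiplicity-free (cf.\ \cite[Corollary 3.2]{za}). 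Transporting this across the Morita equivalence, every finitely generated indecomposable left $\Lambda'$-module has multiplicity-free top. Since $\Lambda'$ is left artinian, hence semiperfect, such a module is cyclic: its top is a multiplicity-free semisimple module, which is cyclic because over a semisimple ring a direct sum of pairwise non-isomorphic simple modules is generated by one element, and a finitely generated module with cyclic top is cyclic by Nakayama's lemma. Finally, $\Lambda'$ is representation-finite, so by \cite[Corollary 4.8]{ausla2} every left $\Lambda'$-module is a direct sum of finitely generated indecomposables, hence a direct sum of cyclic modules; thus $\Lambda'$ is left K\"othe. As $\Lambda'$ was arbitrary, $\Lambda$ is left Kawada.

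Combining these steps gives, for representation-finite $\Lambda$, the chain of equivalences ``$\Lambda$ left Kawada'' $\Leftrightarrow$ ``$\Lambda_{0}$ left K\"othe'' $\Leftrightarrow$ ``$T_{0}$ generalized right QF-2'' $\Leftrightarrow$ ``$T$ generalized right QF-2'', and restricting the Auslander correspondence yields the desired bijection. I expect the main obstacle to be exactly the converse in the previous paragraph: the left K\"othe property is \emph{not} itself Morita invariant, so it cannot simply be carried across the Morita class. The resolution is that its Morita-invariant shadow, the multiplicity-freeness of tops, does transfer from the basic representative and is already strong enough to force every indecomposable over every member of the class to be cyclic, precisely because multiplicity-free tops remain cyclic over an arbitrary, possibly non-basic, semisimple quotient.
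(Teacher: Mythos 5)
Your proposal is correct, and its skeleton coincides with the paper's: both restrict the classical Auslander correspondence $[\Lambda]\mapsto[\End_{\Lambda}(X)]$, pass to the basic representative $\Lambda_{0}$ of each Morita class, invoke Theorem \ref{r33}$(a)\Leftrightarrow(b)$ there, and use that ``artinian generalized right QF-2'' is Morita invariant to move between $T_{0}$ and $T$. The one genuine divergence is at the crucial converse step, ``$\Lambda_{0}$ basic left K\"othe $\Rightarrow$ $\Lambda$ left Kawada'': the paper simply cites \cite[Corollary 4.2]{za} for this, whereas you prove it inline by identifying the Morita-invariant shadow of the K\"othe property (multiplicity-freeness of tops of finitely generated indecomposables), transporting it across the equivalence, and then recovering cyclicity over an arbitrary semiperfect member of the class via the observation that a multiplicity-free semisimple module over a semisimple ring is cyclic together with Nakayama's lemma and \cite[Corollary 4.8]{ausla2}. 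Your argument is sound (it is essentially a reproof of the cited corollary) and has the advantage of making the theorem self-contained modulo Theorem \ref{r33}; the paper's version is shorter at the cost of an extra external reference.
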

\begin{proof}
Let $\mathfrak{A}$ be the set of Morita equivalence classes of rings of finite representation type and $\mathfrak{B}$ be the set of Morita equivalence classes of Auslander rings. By \cite[Corollary 4.7]{ausla2},  the mapping $\varphi: \mathfrak{A} \rightarrow \mathfrak{B}$ via $\Lambda \mapsto {\rm End}_{\Lambda}(U)$ is a bijection, where $U$ is a direct sum of all modules in a complete set of representative of the isomorphic classes of  finitely generated indecomposable left $\Lambda$-modules. Let $\Lambda$ be a left Kawada ring. Since $\Lambda$ is a semiperfect ring, $\Lambda$ is Morita equivalent to $\Lambda_0$ for some basic ring $\Lambda_0$. $\Lambda_0$ is a left K\"othe ring and so by Theorem \ref{r33}, $\varphi(\Lambda_0)$ is a generalized right QF-2 ring. Since $\varphi(\Lambda)$ is Morita equivalent to $\varphi(\Lambda_0)$, we can see that the Auslander ring $\varphi(\Lambda)$ is a generalized right QF-2 ring. Let $R$ be an Auslander generalized right QF-2 ring. Then there exists a representation-finite ring $\Delta$ such that $\varphi(\Delta)$ is Morita equivalent to $R$. Since $\Delta$ is semiperfect, $\Delta$ is Morita equivalent to a basic ring $\Delta_0$. It follows that $\varphi(\Delta_0)$ is Morita equivalent to $\varphi(\Delta)$. This leads that $R$ is Morita equivalent to $\varphi(\Delta_0)$. Consequently $\varphi(\Delta_0)$ is a generalized right QF-2 ring. So we deduce from Theorem \ref{r33} that $\Delta_0$ is a left K\"othe ring. Hence by \cite[Corollary 4.2]{za}, $\Delta$ is a left Kawada ring. Therefore the mapping $\varphi$ induces a bijection between the Morita equivalence classes of left Kawada rings and the Morita equivalence classes of Auslander generalized right QF-2 rings.
\end{proof}

\section*{acknowledgements}
The research of the first author was in part supported by a grant from IPM. Also, the research of the second author was in part supported by a grant from IPM (No. 1400170417).

\end{document}